\newtheorem{theorem}{Theorem}[section]
\newtheorem{proposition}[theorem]{Proposition}
\newtheorem{lemma}[theorem]{Lemma}
\newtheoremstyle{case}{}{}{}{}{}{:}{ }{}
\theoremstyle{case}
\newtheorem{case}{\textbf{Case}}
\newcommand{\setcaseprenumber}[1]{%
	\setcounter{case}{0}%
}
\numberwithin{subcase}{case}
\newcommand{\R}{\mathbb R}
\newcommand{\N}{\mathbb N}
\def\cp{\,\square\,}
\DeclareMathOperator{\sg}{\rm sg}
\DeclareMathOperator{\sge}{\rm sg_e}
\DeclareMathOperator{\diam}{\rm diam}
\newcommand\mreza[2]{
	\pgfmathsetmacro{\n}{#1}
	\pgfmathsetmacro{\m}{#2}
	\ifthenelse{\m=0}{
		\foreach \i in {1,...,\n}{
			\draw (\i,1) -- (\i,2);
			\draw (\i,3) -- (\i,4);
			\draw[color=white, text=black] (\i,2.6) node {\tiny \vdots};
		}
		\foreach \j in {1,...,4}{
			\draw (1,\j) -- (\n,\j);
		}
		\foreach \i in {1,...,\n}{
			\foreach \j in {1,...,4}{
				\draw[fill=white] (\i,\j) circle(5pt);
		}}
	}
	{
		\ifthenelse{\n = 0}{
			\foreach \i in {1,...,3}{
				\draw (\i,1) -- (\i,\m);
			}
			\draw (5,1) -- (5,\m);
			\draw (6,1) -- (6,\m);
			\foreach \j in {1,...,\m}{
				\draw (1,\j) -- (3.5,\j);
				\draw (4.5,\j) -- (6,\j);
				\draw[color=white, text=black] (4,\j) node {\tiny \dots};
			}
			\foreach \j in {1,...,\m}{
				\draw[fill=white] (1,\j) circle(5pt);
				\draw[fill=white] (2,\j) circle(5pt);
				\draw[fill=white] (3,\j) circle(5pt);
				\draw[fill=white] (5,\j) circle(5pt);
				\draw[fill=white] (6,\j) circle(5pt);
			}
		}{
			\foreach \i in {1,...,\n}{
				\draw (\i,1) -- (\i,\m);
			}
			\foreach \j in {1,...,\m}{
				\draw (1,\j) -- (\n,\j);
			}
			\foreach \i in {1,...,\n}{
				\foreach \j in {1,...,\m}{
					\draw[fill=white] (\i,\j) circle(5pt);
			}}
		}
	}
}
\newcommand\mrezacrtkano[2]{
	\pgfmathsetmacro{\n}{#1}
	\pgfmathsetmacro{\m}{#2}
	\foreach \i in {1,...,\n}{
		\draw[densely dotted] (\i,1) -- (\i,\m);
	}
	\foreach \j in {1,...,\m}{
		\draw[densely dotted] (1,\j) -- (\n,\j);
	}
	\foreach \i in {1,...,\n}{
		\foreach \j in {1,...,\m}{
			\draw[fill=white] (\i,\j) circle(5pt);
		}
	}
}
\begin{document}
	
\tikzstyle{every node}=[circle, draw, fill=white,
inner sep=0pt, minimum width=10pt]

\title{Strong edge geodetic problem on grids}
\author{Eva Zmazek}

\date{}

\maketitle
\vspace{-0.8 cm}
\begin{center}
	Faculty of Mathematics and Physics, University of Ljubljana, Slovenia\\
	{\tt eva.zmazek@fmf.uni-lj.si}
\end{center}


\begin{abstract}
Let $G=(V(G),E(G))$ be a simple graph. A set $S \subseteq V(G)$ is a strong edge geodetic set if there exists an assignment of exactly one shortest path between each pair of vertices from $S$, such that these shortest paths cover all the edges $E(G)$. The cardinality of a smallest strong edge geodetic set is the strong edge geodetic number $\sge(G)$ of $G$.
In this paper, the strong edge geodetic problem is studied on the Cartesian product of two paths. The exact value of the strong edge geodetic number is computed for $P_n \cp P_2$, $P_n \cp P_3$ and $P_n \cp P_4$. Some general upper bounds for $\sge(P_n \cp P_m)$ are also proved.
\end{abstract}

\noindent{\bf Keywords:} strong geodetic problem; strong edge geodetic problem; Cartesian product of paths

\medskip
\noindent{\bf AMS Subj.\ Class.: 05C12, 05C70}

\section{Introduction}

Different covering problems with shortest paths were studied in literature. For example, the geodetic problem was introduced in $1993$ in \cite{harary1993tgnoag} and its edge version in $2007$ in \cite{santhakumaran2007egnoag}. In $2016$, the strong geodetic problem was introduced, the seminal paper \cite{manuel2020sgpin} being published only recently. Since then, a lot of work was done on the strong geodetic problem. 

The exact value of the strong geodetic number was computed for different families of graphs. For example, for complete bipartite graphs $K_{n,m}$ it was first computed for cases when $n=m$, and for $n \gg m$ in \cite{irsic2018sgnocbgagwsd}, and later in the general case in \cite{gledel2020sgnocbgcgah}. In \cite{irsic2019sgpocmg}, the strong geodetic number was computed for some balanced multipartite complete graphs and it was shown that computing the strong geodetic number of general complete multipartite graphs is NP-complete. The exact strong geodetic number was also computed for crown graphs $S_0^n$ in \cite{gledel2020sgnocbgcgah}, Hamming graphs $K_m \cp K_n$ in \cite{irsic2018sgpocpog}, Cartesian products $K_{1,n} \cp P_l$ in \cite{irsic2018sgpocpog}, thin ($n \gg m$) grids $P_n \cp P_m$, and thin ($n \gg m$) cylinders $P_n \cp C_m$ in \cite{klavzar2018sgpigla}, and $i$-level complete Appolonian networks $A(i)$ in \cite{manuel2020sgpin}.

Several general bounds on the strong geodetic number were given using different graphical invariants. In \cite{irsic2018sgnocbgagwsd}, bounds on $\sge(G)$ were given depending on the diameter $\diam(G)$ of $G$. In \cite{wang2020sgnogac}, an upper bound on the strong geodetic number was given using the connectivity number. In \cite{manuel2020sgpin}, lower and upper bounds were given using isometric path number. Gledel at al. \cite{gledel2019sgcacpg} proved an upper bound on the strong geodetic number of Cartesian product graphs using the so-called strong geodetic core number. In \cite{gledel2020sgnocbgcgah}, an upper bound on the strong edge geodetic number was given for hypercubes. A general upper bound was in \cite{irsic2018sgpocpog} computed for the strong geodetic number of Cartesian product graphs and an upper bound for the strong geodetic number of the Cartesian product of a path with an arbitrary graph was computed in \cite{klavzar2018sgpigla}.

Using reduction from NP-completeness of dominating set problem, Manuel et al. in \cite{manuel2020sgpin} proved that the strong geodetic problem is NP-complete. On the positive side Mezzini in \cite{mezzini2020aoafctsgniog} gave a polynomial algorithm for computing the strong geodetic number of outerplanar graphs. Some general properties about the strong geodetic number were also given. For example, in \cite{wang2020sgnogac} the graphs with the strong geodetic number $2$, $n(G)$, or $n(G)-1$, were charactarized. In \cite{irsic2018sgpocpog} relation between the strong geodetic number of a graph and its induced, convex, or gated subgraphs were derived.

Like geodetic problem and many other problems in graph theory, there is an interesting edge version of the problem. In $2017$, the edge version of the strong geodetic problem, called the strong edge geodetic problem, was introduced in \cite{manuel2017segpin}, but it did not get as much attention as the vertex version. This gap is in part filled in this paper. In the seminal paper \cite{manuel2017segpin} it was proved that the strong edge geodetic problem is NP-complete and some general upper and lower bound were given using the isometric path number, the number of simplicial vertices in a graph, and the number of convex components in graph. In this article, we will show that even though the vertex and edge version of the strong geodetic problem seem similar at the first sign, they differ a lot. For example, in \cite{klavzar2018sgpigla} it was proved that if $2 \leq n \leq m$, then $\sg(P_n \cp P_m) \leq \left\lceil 2 \sqrt{n} ~ \right\rceil$, while we will prove that this is not true for the strong edge geodetic number $\sge(P_n \cp P_m)$ when $m = 3$ or $4$. The main results of this article are the following three theorems.

\begin{theorem}
	\label{theoremsgePnP2}
	If $n \geq 2$, then $\sge(P_n \cp P_2) = \left\lceil 2 \sqrt{n} ~ \right\rceil$.
\end{theorem}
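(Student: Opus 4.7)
The plan is to prove matching lower and upper bounds on $\sge(P_n \cp P_2)$, exploiting the simple structure of shortest paths in the $n \times 2$ ladder.

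\textbf{Lower bound.} Let $S$ be a strong edge geodetic set with $|S| = k$, and write $k_t$ (resp.\ $k_b$) for the number of vertices of $S$ in the top (resp.\ bottom) row. A shortest path between $(i,a)$ and $(j,b)$ in $P_n \cp P_2$ has length $|i-j| + |a-b|$ and uses exactly $|a-b| \in \{0,1\}$ rungs, so only the $k_t k_b$ mixed-row pairs can cover rungs, and each such path covers exactly one rung. Since all $n$ rungs must be covered, $k_t k_b \geq n$. By AM-GM, $k = k_t + k_b \geq 2\sqrt{k_t k_b} \geq 2\sqrt{n}$, so $k \geq \lceil 2\sqrt{n}\, \rceil$.

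\textbf{Upper bound.} Set $k = \lceil 2\sqrt{n}\, \rceil$, $a = \lceil k/2 \rceil$, $b = \lfloor k/2 \rfloor$. First I verify $ab \geq n$: for even $k$, $ab = k^2/4 \geq n$; for odd $k = 2m+1$, $ab = m(m+1) = (k^2-1)/4 \geq n - 1/4$, hence $\geq n$ by integrality. Then I place $a$ vertices on the top row including the corners $t_1, t_n$ and $b$ vertices on the bottom including $b_1, b_n$; the degenerate case $b = 1$ (which forces $k = 3, n = 2$) is handled directly by inspection. The shortest path between two consecutive top vertices is unique and lies in the top row, so these paths collectively cover all top edges, and similarly for the bottom. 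Each mixed-row pair $(t_p, b_q)$ has a shortest path of length $|p-q|+1$ that uses the rung at column $c$ for any $c \in [\min(p,q), \max(p,q)]$, so to finish it suffices to exhibit an injection from the $n$ rungs into the $ab$ mixed pairs such that each rung $r$ lies in the covering interval of its image pair.

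\textbf{Main obstacle.} The technical heart of the proof is choosing the positions of the top and bottom vertices so that this injection exists. The intuition is to space them roughly uniformly within $\{1,\ldots,n\}$; then for every rung $r$ the number of mixed pairs whose interval contains $r$ is large enough for Hall's condition to hold on the bipartite graph of (rung, covering pair), or equivalently for an explicit greedy matching (sweeping pair indices $(i,j) \in [a] \times [b]$ in a diagonal order, and assigning each to a fresh rung inside its interval) to succeed. Once the injection is fixed, routing each mixed shortest path through its assigned rung and each same-row path along its row yields a strong edge geodetic set of size $\lceil 2\sqrt{n}\, \rceil$, matching the lower bound and establishing equality.
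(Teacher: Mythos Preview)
Your lower bound is correct and is exactly the convex edge-cut inequality the paper invokes (Lemma~2.1), just unpacked: a geodesic in $P_n\cp P_2$ crosses the set of $n$ rungs at most once, so $k_tk_b\ge n$ and AM--GM finishes. Nothing to add there.

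The upper-bound \emph{framework} is also the same as the paper's---split $\lceil 2\sqrt{n}\,\rceil$ vertices between the two rows as evenly as possible and use the $ab\ge n$ mixed pairs to hit the rungs---but you stop exactly at the point where the real work begins. You assert that ``roughly uniform'' spacing makes a Hall-type matching or a diagonal greedy succeed, yet you never fix the positions or verify the matching; as written this is a plan, not a proof. The difficulty is genuine: for a pair $(t_i,b_j)$ the reachable rungs form the interval $[\min(t_i,b_j),\max(t_i,b_j)]$, and whether $n$ rungs can be injected into $ab$ such intervals depends delicately on the chosen positions (for instance, the pairs $(t_1,b_1)$ and $(t_a,b_b)$ each see a single rung, so slack is tighter than the count $ab\ge n$ suggests).

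The paper sidesteps the existence argument entirely by giving an explicit placement and assignment (Algorithm~1): put $a_i=(i^2,1)$ and $b_i=(i^2,2)$ for $i=1,\dots,k$, and in the $i$th round use the $2i-1$ new mixed pairs $\{a_i,b_1\},\dots,\{a_i,b_{i-1}\},\{a_1,b_i\},\dots,\{a_{i-1},b_i\},\{a_i,b_i\}$ to cover the $2i-1$ new columns $(i-1)^2+1,\dots,i^2$; a short inequality check shows each assigned column lies in the corresponding interval. One or two extra vertices at column $n$ handle the residue $h$ in $n=k^2+h$, and the two row-geodesics $a_1a_k$, $b_1b_k$ (or their analogues) pick up the horizontal edges. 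This is cleaner than invoking Hall and, more importantly, it actually closes the gap you flagged as the ``main obstacle''. To turn your sketch into a proof you need either this explicit scheme or a concrete verification that some specific spacing satisfies Hall's condition for all $n$.
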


\begin{theorem}
	\label{theoremsgePnP3}
	If $n \geq 2$, then $\sge(P_n \cp P_3) = \left\lceil 2 \sqrt{n+1} ~ \right\rceil$.
\end{theorem}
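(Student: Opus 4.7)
I will prove the two directions separately.

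For the \emph{upper bound} $\sge(P_n \cp P_3) \leq \lceil 2\sqrt{n+1}\,\rceil$, set $k = \lceil 2\sqrt{n+1}\,\rceil$, so $k^2 \geq 4(n+1)$ and one can find positive integers $a, b$ with $a + b = k$ and $ab \geq n+1$. The construction places $a$ vertices in row $1$ and $b$ vertices in row $3$, with $x$-coordinates chosen so that column $1$ and column $n$ are each occupied in some row, and the row-$1$ columns are well interleaved with the row-$3$ columns. Within-row pairs provide horizontal geodesics that take care of all row-$1$ and row-$3$ horizontal edges. For each of the $ab$ cross-row pairs $(u,v)$ with $u$ in row $1$ and $v$ in row $3$, I will use a zigzag geodesic of the form $(x_u,1) - (c_1,1) - (c_1,2) - (c_2,2) - (c_2,3) - (x_v,3)$; the assignment of $(c_1, c_2)$ is chosen so that across all cross-row pairs, each column $i \in \{1, \ldots, n\}$ is hit by some $c_1$ (covering the lower vertical edge at column $i$), each column by some $c_2$ (covering the upper vertical edge), and each row-$2$ horizontal edge is swept by some pair with $c_1 < c_2$ whose interval contains it. Since $ab \geq n + 1$ provides one more pair than is strictly needed for the $2n$ vertical edges, there is exactly the slack required to also cover the $n-1$ row-$2$ horizontal edges.

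For the \emph{lower bound} $\sge(P_n \cp P_3) \geq \lceil 2\sqrt{n+1}\,\rceil$, take a strong edge geodetic set $S$ of size $k$, and let $a, c, b$ count its vertices in rows $1, 2, 3$ respectively. Splitting vertical edges into lower ones and upper ones and tracking their contributions from each pair type yields $ab + ac \geq n$ and $ab + bc \geq n$. A convexity argument using $a + b + c = k$ produces $k \geq 2\sqrt{n}$. To upgrade this to $k \geq 2\sqrt{n+1}$, I would analyze the tight case $ab = n$ (with $c = 0$ first, then its generalizations): here both the multiset of $c_1$-columns and the multiset of $c_2$-columns chosen across the $ab$ cross-row pairs are forced to be exactly $\{1, \ldots, n\}$, but the orientation constraint $c_1 \leq c_2$ or $c_1 \geq c_2$ inside each pair (depending on whether the row-$1$ vertex is to the left or right of the row-$3$ vertex) forces the induced bijection $c_1 \mapsto c_2$ to be too rigid to simultaneously cover the $n-1$ row-$2$ horizontal edges. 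Hence $ab = n$ is infeasible and one must have $ab \geq n+1$, giving $k \geq 2\sqrt{n+1}$.

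The main obstacle is precisely this final refinement: the vertical-edge counting alone yields only $k \geq 2\sqrt n$, and the extra $+1$ is subtle. It arises from the interaction between vertical-edge coverage and row-$2$ horizontal coverage, coupled with the monotonicity of each cross-row geodesic's zigzag shape. Setting up the right combinatorial framework to make this rigorous, and in particular handling the case $c \geq 1$ (where within-row-$2$ pairs can cover row-$2$ horizontal edges but at the cost of reducing cross-row pair capacity), is the key challenge, and I expect the full proof to involve a case analysis on $(a, b, c)$ and on the relative ordering of the row-$1$ and row-$3$ $x$-coordinates of the vertices in $S$.
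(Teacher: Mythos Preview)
Your outline identifies the right structure --- the answer is $\lceil 2\sqrt n\rceil$ except at the threshold values $n=k^2$ and $n=k^2+k$, where one more vertex is needed --- but in both directions the decisive step is asserted rather than proved.

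For the upper bound, the claim that $ab\ge n+1$ cross-row pairs suffice to cover all $2n$ vertical edges \emph{and} all $n-1$ row-$2$ horizontal edges is not automatic: each pair $(p_i,q_j)$ constrains its zigzag columns $(c_1,c_2)$ to lie in $[\min(p_i,q_j),\max(p_i,q_j)]$, so whether a valid assignment exists depends on where you actually place the $p_i$'s and $q_j$'s, which you leave at ``well interleaved''. The paper takes a different, fully explicit route: it first records the easy bound $\sge(P_n\cp P_3)\le\lceil 2\sqrt n\rceil+1$ by appending the single vertex $(n,2)$ to the Algorithm~\ref{algorithm1verticaledgesinPnPm} set, and then, for each $n\ne k^2,k^2+k$, exhibits a concrete rerouting of one or two specific geodesics that frees an $a_1,b_{k+1}$-geodesic to traverse row~$2$.

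For the lower bound, your inequalities $\alpha(k-\alpha)\ge n$ and $\gamma(k-\gamma)\ge n$ (with $\alpha,\gamma$ the row-$1$ and row-$3$ counts) correctly yield $k\ge 2\sqrt n$ and, at the thresholds, nearly pin down $(\alpha,\beta,\gamma)$. But the ``rigidity'' step --- that with exactly $n$ cross-row pairs the forced column bijections cannot also sweep row~$2$ --- is only a heuristic, and for $n=k^2+k$ the configuration $(\alpha,\beta,\gamma)=(k,1,k)$ is consistent with your inequalities yet lies outside your $c=0$ analysis. The paper's argument is both shorter and uniform: it observes that unless $(1,2)\in S$, any geodesic covering the row-$2$ edge $(1,2)(2,2)$ and any geodesic covering the vertical edge $(1,2)(1,3)$ must each use the edge $(1,1)(1,2)$, so that edge is double-covered and the vertical-edge budget drops by one --- already fatal at the thresholds. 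Symmetry then forces $(n,2)\in S$ as well, and two row-$2$ vertices lower the maximum of the counting function $f_s(a,b,c)=ab+bc+2ac$ enough to finish.
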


\begin{theorem}
	\label{theoremsgePnP4}
	If $n \geq 2$, then
	$$\sge(P_n \cp P_4) =
	\begin{cases}
	2k+1; & n=k^2+h, 0 \leq h \leq k-1, \\
	2k+2; & n=k^2+h, k \leq h \leq 2k-1, \\
	2k+3; & n=k^2+2k.
	\end{cases}
	$$
\end{theorem}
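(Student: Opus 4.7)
The plan is to prove matching upper and lower bounds, with the three-case split in $n = k^2 + h$ reflecting three qualitatively distinct optimal configurations of the strong edge geodetic set.

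For the upper bound, I would construct an explicit strong edge geodetic set $S$ of the claimed size in each case and specify a shortest path for every pair of vertices in $S$. A natural starting point is to place most vertices on rows $1$ and $4$ (the top and bottom rows of $P_n \cp P_4$), since a pair with one top and one bottom endpoint has maximum flexibility: its shortest paths may traverse any of the four horizontal edges in each vertical cut they cross. In the base case $s = 2k+1$ I would place roughly $k+1$ vertices on the top row and $k$ on the bottom, with column positions chosen so that the intervals $[i_u, i_v]$ taken over pairs $\{u,v\} \subseteq S$ cover each of the $n-1$ vertical cuts at least four times and with enough combinatorial freedom to assign the four horizontal edges of each cut to distinct pairs. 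Every column also contains three vertical edges which must be covered; this is arranged by routing a suitable top-bottom pair's shortest path entirely inside that column before any horizontal motion. The transitions $2k+1 \to 2k+2$ at $h = k$ and $2k+2 \to 2k+3$ at $h = 2k$ correspond to needing one more vertex in order to extend the coverage to the right end of the grid once the spacing afforded by the previous budget is exhausted.

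For the lower bound, let $S$ be a strong edge geodetic set of size $s$ and let $n_i$ denote the number of vertices of $S$ in column $i$. Consider the vertical cut between columns $c$ and $c+1$, consisting of four horizontal edges, one per row. Each shortest path joining endpoints in opposite halves of this cut uses exactly one of these four edges, so all four being covered forces at least four straddling pairs, yielding $\bigl(\sum_{i \le c} n_i\bigr)\bigl(\sum_{i > c} n_i\bigr) \ge 4$ for every $c \in \{1, \ldots, n-1\}$. Summing the crossing contributions and invoking coverage of all $4(n-1)$ horizontal edges gives $\sum_{\{u,v\} \subseteq S} |i_u - i_v| \ge 4(n-1)$; combined with the cut constraints near the two ends and an AM-GM style optimization over admissible column placements, this yields the leading bound $s \ge 2\sqrt{n}$. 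The integer jumps are obtained by refining the estimate: one tracks, for each cut, which rows of the four available horizontal edges are realisable by each straddling pair (this is controlled by the rows of its endpoints), and supplements this with the constraint that every column's three vertical edges must be covered by a pair whose endpoints straddle the relevant row cut.

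The main obstacle will be sharpening the lower bound enough to distinguish $2k+1$, $2k+2$, and $2k+3$ at the critical transitions $h = k$ and $h = 2k$. The leading $2\sqrt{n}$ behaviour follows from a relatively coarse cut count, but the unit jumps require a finer pigeonhole, probably a parity argument on the column positions of $S$ together with a case split on how the $s$ vertices distribute among the four rows. I expect the extremal case $n = k^2 + 2k$ to be the subtlest: one must rule out any placement of $2k+2$ vertices, together with any choice of shortest paths, from covering every edge, and the decisive argument will likely live in the cuts near the two ends of the grid, where the cut product $n_{\le c} \cdot n_{> c}$ is forced to drop most sharply.
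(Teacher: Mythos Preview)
Your lower-bound argument has a fundamental gap. The vertical cuts you consider (between columns $c$ and $c+1$) contain only four horizontal edges each, so the per-cut constraint $L_c R_c \ge 4$ and the aggregate bound $\sum_{\{u,v\}} |i_u - i_v| \ge 4(n-1)$ are far too weak to force $s \ge 2\sqrt{n}$: placing two vertices in column $1$ and three in column $n$ satisfies every one of these constraints for arbitrarily large $n$, yet $s = 5$. The $2\sqrt{n}$ growth comes from cuts in the \emph{other} direction. The $n$ vertical edges between rows $1$ and $2$ form a convex edge-cut $F$; each geodesic crosses $F$ at most once, so covering all of $F$ forces $a(s-a) \ge n$ where $a = |S \cap (\text{row }1)|$, hence $s \ge \lceil 2\sqrt{n}\,\rceil$. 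You mention the row-cut constraint only as a late supplement, but it is the entire engine of the lower bound, not a refinement of your column count.

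The paper exploits this asymmetry throughout. A generic construction gives $\sge(P_n \cp P_4) \le \lceil 2\sqrt{n}\,\rceil + 1$, so together with the convex-cut lower bound the answer is pinned to within one unit, and only the boundary values $n = k^2$, $k^2+k$, $k^2+2k$ require further work. For non-boundary $n$ the paper tweaks the construction to free up two geodesics that then sweep rows $2$ and $3$. For the boundary values the decisive count is again vertical edges: with $a,b,c,d$ vertices of $S$ in rows $1,2,3,4$ respectively, the number of vertical edges coverable is at most $f_s(a,b,c,d) = ab+bc+cd+2ac+2bd+3ad$; maximising this quadratic form subject to $a+b+c+d = s$ and to side constraints on how many of $(1,2),(1,3),(n,2),(n,3)$ lie in $S$, combined with an accounting of forced double-coverage of edges in the first and last columns, shows that $f_s$ falls short of the $3n$ vertical edges present. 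Your column-based bookkeeping never isolates these $3n$ vertical edges as the binding constraint, which is why it cannot resolve the integer jumps.
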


Theorem~\ref{theoremsgePnP4} implies that $\sge(P_n \cp P_4) = \left\lceil 2 \sqrt{n+2} ~ \right\rceil$ for all $n \in \N$ except when $n=k^2+k-1$ for some $k \in \N$. This can also be interpreted as $\sge(P_n \cp P_4) = \left\lceil 2 \sqrt{n+1} ~ \right\rceil$ for all $n \in \N$ except when $n=k^2+2k$ for some $k \in \N$. This shows that the pattern from Theorems~\ref{theoremsgePnP2} and \ref{theoremsgePnP3} does not extend to $n \geq 4$.

In the next section we formally define concepts needed in this paper and prepare several preliminary results. Then, in Sections~\ref{sec:proofoftheoremPnP2}-\ref{sec:proofoftheoremPnP4}, we prove Theorems~\ref{theoremsgePnP2}, \ref{theoremsgePnP3}, \ref{theoremsgePnP4}, respectively. In the last section we give three general upper bounds on $\sge(P_n \cp P_m)$.

\section{Preliminaries}
\label{sec:preliminaries}

Let $G = (V(G),E(G))$ be a simple graph. A $x,y$-geodesic is a shortest path between vertices $x$ and $y$. With $P(G;x,y)$ we denote the set of all shortest paths in $G$ between vertices $x$ and $y$. A set $S \subseteq V(G)$ is a \emph{strong edge geodetic set} if there exists an assignment of shortest paths $P_{x,y} \in P(G;x,y)$ for every pair $x,y \in V(G)$, such that $$\bigcup\limits_{\{x,y\} \in \binom{S}{2}} E(P_{x,y}) = E(G),$$ where $E(P_{x,y})$ denotes the set of edges from the selected shortest path $P_{x,y}$. The set of these shortest paths is called the \emph{strong edge covering}. The \emph{strong edge geodetic number} of $G$, denoted by $\sge(G)$, is the cardinality of a smallest strong edge geodetic set of $G$.

\emph{The Cartesian product} $G \cp H$ of graphs $G$ and $H$ is the graph on the vertex set $V(G \cp H) = V(G) \times V(H)$, where two vertices $(g_1,h_1)$ and $(g_2,h_2)$, $g_1,g_2 \in V(G)$, $h_1,h_2 \in V(H)$ are adjecent if $g_1g_2 \in E(G)$ and $h_1 = h_2$, or if $g_1 = g_2$ and $h_1h_2 \in E(H)$. An edge $(g_1,h_1)(g_2,h_2) \in E(G \cp H)$ is said to be \emph{horizontal} if $h_1 = h_2$ and is said to be \emph{vertical} if $g_1 = g_2$.
A \emph{grid} is the Cartesian product of two paths. The \emph{$i$-th row}, $1 \leq i \leq m$, in $P_n \cp P_m$ is the vertex set $\{(1,i), \dots, (n,i)\}$ together with the horizontal edges between them. Similarly, the \emph{$j$-th column}, $1 \leq j \leq n$, in $P_n \cp P_m$ is the vertex set $\{(j,1), \dots, (j,m)\}$ together with the vertical edges between them.    
Because of the commutativity of the Cartesian product operation, $\sge(G \cp H)= \sge(H \cp G)$.
A subgraph $H$ of a graph $G$ is \emph{convex} if for every pair of vertices $\{x,y\}$ in $H$, every $x,y$-geodesic lies completely in $H$. A set of edges $F \subseteq E(G)$ is a \emph{convex edge-cut} if $G - F$ has precisely two convex components.

\begin{lemma}
	\label{lemmaeasylowerboundofPnPm}
	If $n \geq 2$ and $m \geq 2$, then $\sge(P_n \cp P_m) \geq \left\lceil 2 \sqrt{n} ~ \right\rceil$.
\end{lemma}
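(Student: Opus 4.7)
The plan is to apply a convex edge-cut argument to the set $F = \{(j,1)(j,2) : 1 \leq j \leq n\}$ of the $n$ vertical edges joining row~1 to row~2. Since rows of $P_n \cp P_m$ are convex subgraphs, $F$ is a convex edge-cut whose two components are row~1 (isomorphic to $P_n$) and the union of the remaining rows (isomorphic to $P_n \cp P_{m-1}$).

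The key property I would verify first is that any shortest path in $P_n \cp P_m$ uses at most one edge of $F$, with equality if and only if exactly one of its endpoints lies in row~1. This is immediate from the fact that along a shortest path the sequence of row indices is monotone, so a geodesic staying within row~1 or staying within the rows of index $\geq 2$ uses no edge of $F$, while a geodesic with one endpoint in row~1 and the other in some row of index $\geq 2$ must cross $F$ exactly once.

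Now let $S$ be any strong edge geodetic set of $P_n \cp P_m$, set $s = |S|$, and let $s_1 = |S \cap \text{row 1}|$. Each of the $n$ edges of $F$ must be covered by the selected geodesic of some pair in $\binom{S}{2}$, and by the previous paragraph only the $s_1(s - s_1)$ pairs split across row~1 and its complement can contribute, each covering at most one edge of $F$. Hence $s_1(s - s_1) \geq n$. Applying the AM--GM inequality in the form $s_1(s - s_1) \leq s^2/4$ gives $s^2 \geq 4n$, so $s \geq 2\sqrt{n}$, and integrality of $s$ yields $s \geq \lceil 2\sqrt{n}\,\rceil$, as claimed.

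The argument is essentially a direct application of the convex edge-cut idea just introduced in the preliminaries, so I do not anticipate any real obstacle; the only point requiring care is the monotonicity observation, which is what pins down that each admissible geodesic contributes at most one edge to $F$ and thus turns the covering requirement into the clean inequality $s_1(s-s_1) \geq n$.
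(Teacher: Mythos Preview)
Your proof is correct and follows essentially the same approach as the paper: both use the convex edge-cut $F$ consisting of the $n$ vertical edges between the first and second row. The only difference is cosmetic---the paper invokes the general bound $\sge(G) \geq \lceil 2\sqrt{|F|}\,\rceil$ for convex edge-cuts as a black box, while you unpack that argument (monotone row index, the count $s_1(s-s_1)\geq n$, and AM--GM) explicitly.
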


\begin{proof}
	In \cite[Corollary 6.4]{manuel2017segpin} it is proved that if $F$ is a convex edge-cut of a graph $G$, then $\sge(G) \geq \lceil 2 \sqrt{|F|} \rceil.$ In our case, all vertical edges between the first and the second row in $P_n \cp P_m$ represent a convex edge-cut set of $P_n \cp P_m$ (see Fig.~\ref{figure: convexedgecutinPnPm}). Because there are exactly $n$ vertical edges between the first and the second row in $P_n \cp P_2$, the inequality holds.
\begin{figure}[ht!]
		\centering
		\begin{tikzpicture}[thick,scale=0.7]
		\foreach \i in {1,...,10}{
			\draw[line width = 2.5pt] (\i,0) -- (\i,1);
		};
		\mreza{10}{0}
		\draw (1,0) -- (10,0); 
		\foreach \i in {1,...,10}{
			\draw[fill=white] (\i,0) circle(5pt);
		}
		\end{tikzpicture}
		\caption{Edge-cut in $P_n \cp P_m$.}
		\label{figure: convexedgecutinPnPm}
	\end{figure}
\end{proof}

Throughout the rest of this paper, we will use Algorithm~\ref{algorithm1verticaledgesinPnPm} to prove upper bounds on the strong edge geodetic number. The input of this algorithm are an integer $n$ and an integer $m$. Integer $n$ can be uniquely  written as the sum $n=k^2+h$, where $k$ and $h$ are integers and $0 \leq h \leq 2k$. Algorithm defines a set of vertices $S$ and a set of shortest paths, where for each pair of vertices in $S$ it uses at most one shortest path between them, such that the union of this shortest paths covers all the vertical edges in $P_n \cp P_m$.

Algorithm~\ref{algorithm1verticaledgesinPnPm} first takes two vertices, $a_1 = (1,1)$ and $b_1 = (1,m)$ from the first column, and cover the vertical edges in the first column by the unique $a_1,b_1$-geodesic. In next step, if $k^2 \geq 4$, the algorithm takes vertices $a_2 = (2^2,1)$ and $b_2 = (2^2,m)$ and covers the second column of edges by the $a_1$,$b_2$-geodesic, the third column of edges by the $a_2,b_1$-geodesic and the fourth column of edges by the $a_2,b_2$-geodesic. After the $(i-1)$-th, $(i-1)^2 \leq n$, step, all the first $(i-1)^2$ columns are already covered. In the $i$-th step, if $i^2 \leq n$, we add vertices $a_i = (i^2,1)$ and $b_i=(i^2,m)$. We cover the next $i-1$ columns by $a_1,b_i$-, \dots, $a_{i-1},b_i$-geodesic, respectively the way every geodesics covers the leftmost not yet covered column of edges. Similarly, we cover $i-1$ edges from the $(i-1)^2+(i-1)+1$-th to the $(i-1)^2+2(i-1)$-th column by $a_i,b_1$-, \dots, $a_i,b_{i-1}$-geodesic, respectively, covering the leftmost not yet covered column of edges. The algorithm covers the $i$-th column of edges by the unique $a_i,b_i$-geodesic. This way we cover all the vertical edges from the first $k^2$ columns.

If $1 \leq h \leq k$, we add the vertex $b_{k+1}=(n,m)$ and then cover the remaining columns of edges by $a_1,b_{k+1}$-, \dots, $a_h,b_{k+1}$-geodesic, respectively, covering the leftmost not yet covered column of edges.
If $k+1 \leq h \leq 2k$, we add vertices $a_{k+1}=(n,1)$ and $b_{k+1}=(n,m)$. We cover the next $k$ columns by $a_1,b_{k+1}$-, \dots, $a_k,b_{k+1}$-geodesic, respectively, the way every geodesics covers the leftmost not yet covered column of edges. Simirarly, we cover the remaining columns of edges by $a_{k+1},b_1$-, \dots, $a_{k+1},b_{h-k}$-geodesic, respectively, covering the leftmost not yet covered column of edges. 

\begin{algorithm}[ht!]
\SetAlgoLined
\vspace{0.2cm}
\KwInput{integer $n=k^2+h$, where $0 \leq h \leq 2k$, and integer $m$}
\vspace{0.2cm}
\KwResult{strong edge geodetic set and a set of shortest paths in $P_n \cp P_m$, $m \geq 2$, that cover all vertical edges}
\vspace{0.2cm}
 \For{$i = 1, \dots, k$}{
 	$a_i = \left(i^2,1\right)$\;
 	$b_i = \left(i^2,m\right)$\;
 	\For{$j = 1, \dots, i-1$}{
 		\parbox[t]{313pt}{connect vertices $a_i$ and $b_j$ with a geodesic that covers all vertical edges in $((i-1)^2+j)$-th column of $P_n \cp P_m$ \strut}
 	}
 	\For{$j = 1, \dots, i-1$}{
		\parbox[t]{313pt}{connect vertices $a_j$ and $b_i$ with a geodesic that covers all vertical edges in $(i(i-1)+j)$-th column of $P_n \cp P_m$ \strut}
	}
	connect vertices $a_i$ and $b_i$ with the unique geodesic between them
 }
 \If{$1 \leq h \leq k$}{
 	$b_{k+1} = \left(n,m\right)$\;
 	\For{$j = 1, \dots, h$}{
 		\parbox[t]{313pt}{connect vertices $a_j$ and $b_{k+1}$ with a geodesic that covers all vertical edges in $(k^2+j)$-th column of $P_n \cp P_m$ \strut}
 	}
 }
 \ElseIf{$k+1 \leq h \leq 2k$}{
 	$a_{k+1} = \left(n,1\right)$\;
 	$b_{k+1} = \left(n,m\right)$\;
 	\For{$j = 1, \dots, k$}{
 		\parbox[t]{313pt}{connect vertices $a_j$ and $b_{k+1}$ with a geodesic that covers all vertical edges in $(k^2+j)$-th column of $P_n \cp P_m$ \strut}
 	}
	\For{$j = 1, \dots, h-k$}{
		\parbox[t]{313pt}{connect vertices $a_{k+1}$ and $b_j$ with a geodesic that covers all vertical edges in $(k(k+1)+j)$-th column of $P_n \cp P_m$ \strut}
	}
 }
 \vspace{0.2cm}
 \caption{Covering vertical edges in grids}
 \label{algorithm1verticaledgesinPnPm}
\end{algorithm}

We conclude the preliminaries with the following technical lemma to be used in our proofs.
\begin{lemma}
	\label{numberceil2timessqrtn}
	$$\left\lceil 2 \sqrt{n\phantom{!}} \right\rceil = 
	\begin{cases}
	2k; & n = k^2, ~ k \in \N, \\
	2k+1; & n = k^2 + h, ~ k \in \N, 1 \leq h \leq k, \\
	2k+2; & n = k^2 + h, ~ k \in \N, k+1 \leq h \leq 2k.
	\end{cases}
	$$
\end{lemma}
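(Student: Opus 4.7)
The plan is to proceed by case analysis on the three given forms of $n$, and in each case identify the unique integer $t$ such that $t-1 < 2\sqrt{n} \leq t$, so that $\lceil 2\sqrt{n} \rceil = t$. Equivalently, since squaring is monotone on nonnegative reals, I will locate $n$ in a half-open interval of the form $((t-1)^2/4, t^2/4]$ for the appropriate integer $t$.

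For the first case $n = k^2$, the claim is immediate because $2\sqrt{n} = 2k$ is already an integer. For the second case $n = k^2 + h$ with $1 \leq h \leq k$, I need to verify $2k < 2\sqrt{n} \leq 2k+1$, which reduces to $k^2 < n \leq k^2 + k + \tfrac{1}{4}$. The lower bound is clear from $h \geq 1$, and the upper bound follows since $h \leq k < k + \tfrac{1}{4}$. For the third case $n = k^2 + h$ with $k+1 \leq h \leq 2k$, I need $2k+1 < 2\sqrt{n} \leq 2k+2$, i.e.\ $k^2 + k + \tfrac{1}{4} < n \leq k^2 + 2k + 1 = (k+1)^2$. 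The lower bound holds because $h \geq k+1 > k + \tfrac{1}{4}$ and the upper bound because $h \leq 2k < 2k+1$.

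There is really no substantive obstacle here; the lemma is purely arithmetic and the only thing to be careful about is matching the strict versus non-strict inequalities at the endpoints. In particular, the boundary values $h = k$ (treated as part of the second case, giving $\lceil 2\sqrt{n}\rceil = 2k+1$) and $h = 2k$ (treated as part of the third case, giving $\lceil 2\sqrt{n}\rceil = 2k+2$) are the only places where one must check that the interval inequality $n \leq k^2 + k + \tfrac{1}{4}$ respectively $n \leq (k+1)^2$ still admits equality up to the ceiling. Writing out the three short inequality checks described above completes the proof.
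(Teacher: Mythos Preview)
Your proof is correct and follows essentially the same approach as the paper: both arguments handle the three cases separately and reduce the ceiling computation to locating $n$ relative to the squared thresholds $k^2$, $(k+\tfrac12)^2 = k^2+k+\tfrac14$, and $(k+1)^2$. The only cosmetic difference is that you frame it as identifying the interval $((t-1)^2/4,\,t^2/4]$ containing $n$, whereas the paper writes out the corresponding chain of inequalities for $\lceil 2\sqrt{n}\rceil$ directly.
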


\begin{proof}
	If $n = k^2$ for some $k \in \N$, then $\left\lceil 2 \sqrt{n\phantom{!}} \right\rceil = \left\lceil 2 \sqrt{k^2} \right\rceil = 2k$.
	
	Suppose $n = k^2 + h$ for some $k,h \in \N$ where $1 \leq h \leq k$. Then because $h>0$ and $\left\lceil 2 \sqrt{k^2} \right\rceil$ is an integer, it holds 
	$$\left\lceil 2 \sqrt{n\phantom{!}} \right\rceil = \left\lceil 2 \sqrt{k^2+h} \right\rceil > \left\lceil 2 \sqrt{k^2} \right\rceil = 2k.$$
	Also, because $h \leq k$ and $\left\lceil 2 \sqrt{k^2} \right\rceil$ is an integer, it holds $$\left\lceil 2 \sqrt{n\phantom{!}} \right\rceil \leq \left\lceil 2 \sqrt{k^2+k} \right\rceil = \left\lceil 2 \sqrt{(k^2+1/2)^2-1/4} \right\rceil \leq \left\lceil 2 \sqrt{(k+1/2)^2} \right\rceil = 2k+1.$$
	Because $\left\lceil 2 \sqrt{n} ~ \right\rceil$ is an integer greater than $2k$ and is also less or equal to $2k+1$, we conclude that $\left\lceil 2 \sqrt{n} ~ \right\rceil = 2k+1$ when $n=k^2+h$, $1 \leq h \leq k$.
	
	Now suppose that $n = k^2 + h$ for some $k,h \in \N$ where $k+1 \leq h \leq 2k$.
	Then because $h \geq k+1$ and $\left\lceil 2 \sqrt{(k+1/2)^2} \right\rceil$ is an integer, it holds
	$$\left\lceil 2 \sqrt{n\phantom{!}} \right\rceil \geq \left\lceil 2 \sqrt{k^2+k+1} \right\rceil = \left\lceil 2 \sqrt{(k+1/2)^2+3/4} \right\rceil > \left\lceil 2 \sqrt{(k+1/2)^2} \right\rceil = 2k+1.$$
	Also, because $h \leq 2k$ and $\left\lceil 2 \sqrt{k^2} \right\rceil$ is an integer, it holds $$\left\lceil 2 \sqrt{n\phantom{!}} \right\rceil \leq \left\lceil 2 \sqrt{k^2+2k} \right\rceil \leq \left\lceil 2 \sqrt{k^2+2k+1} \right\rceil = 2k+2.$$
	Because $\left\lceil 2 \sqrt{n} ~ \right\rceil$ is an integer greater than $2k+1$ and is also less or equal to $2k+2$, we get $\left\lceil 2 \sqrt{n} ~ \right\rceil = 2k+2$ when $n=k^2+h$, $k+1 \leq h \leq 2k$.
\end{proof}

\section{Proof of Theorem~\ref{theoremsgePnP2}}
\label{sec:proofoftheoremPnP2}

The lower bound $\sge(P_n \cp P_2) \geq \left\lceil 2 \sqrt{n} ~ \right\rceil$ follows from Lemma~\ref{lemmaeasylowerboundofPnPm}. It remains to prove the upper bound $\sge(P_n \cp P_2) \leq \left\lceil 2 \sqrt{n} ~ \right\rceil$.

We will use Algorithm~\ref{algorithm1verticaledgesinPnPm} for $m=2$ to cover all the vertical edges in $P_n \cp P_2$. When $n = k^2$ for some $k \in \N$, the number of vertices used in Algorithm~\ref{algorithm1verticaledgesinPnPm} is exactly $2k$, these are the vertices $a_1, \dots, a_k, b_1, \dots, b_k$. When $n=k^2+h$, $k \in \N$, $1 \leq h \leq k$, the number of vertices used in Algorithm~\ref{algorithm1verticaledgesinPnPm} is exactly $2k+1$, and when $n=k^2+h$, $k \in \N$, $k+1 \leq h \leq 2k$, exactly $2k+1$ vertices are used. By Lemma~\ref{numberceil2timessqrtn}, we see that Algorithm~\ref{algorithm1verticaledgesinPnPm} uses exactly $\lceil 2 \sqrt{n} ~ \rceil$ vertices.
	
	It remains to cover the horizontal edges. Observe that Algorithm~\ref{algorithm1verticaledgesinPnPm}
	 uses only the $a_{j_1}$,$b_{j_2}$-geodesics for some $j_1,j_2 \in \N$.
	 
	 If $n = k^2$, all horizontal edges from the first row can be covered by the unique $a_1$,$a_k$-geodesic. Similarly, all horizontal edges from the second row can be covered by the unique $b_1$,$b_k$-geodesic.
	 
	 If $n = k^2 + h$, where $1 \leq h \leq k$, we can cover the horizontal edges in the first row with the unique $a_1$,$a_k$-geodesic. The remaining horizontal edges in the first row are already covered with the shortest path from Algorithm~\ref{algorithm1verticaledgesinPnPm} that covers vertical edges in the $n$-th column. We can cover the horizontal edges from the second row with the unique $b_1$,$b_{k+1}$-geodesic. 
	 
	 If $n=k^2+h$, where $k+1 \leq h \leq 2k$, we can cover the horizontal edges from the first row with the unique $a_1$,$a_{k+1}$-geodesic, and the horizontal edges from the second row with the unique $b_1$,$b_{k+1}$-geodesic.
	 
	 This proves Theorem~\ref{theoremsgePnP2}. See Fig.~\ref{figure:typicaloptimalexamplesstrongedgegeodeticset} for some typical optimal strong edge geodetic sets in $P_n \cp P_2$.

\begin{figure}[ht!]
	\centering
	\begin{tikzpicture}[thick,scale=0.5]
	\mreza{16}{2}
	\draw[fill=black] (1,1) circle(5pt);
	\draw[fill=black] (1,2) circle(5pt);
	\draw[fill=black] (4,1) circle(5pt);
	\draw[fill=black] (4,2) circle(5pt);
	\draw[fill=black] (9,1) circle(5pt);
	\draw[fill=black] (9,2) circle(5pt);
	\draw[fill=black] (16,1) circle(5pt);
	\draw[fill=black] (16,2) circle(5pt);
	\end{tikzpicture}
	
	\vspace{0.5cm}
	
	\begin{tikzpicture}[thick,scale=0.5]
	\mreza{20}{2}
	\draw[fill=black] (1,1) circle(5pt);
	\draw[fill=black] (1,2) circle(5pt);
	\draw[fill=black] (4,1) circle(5pt);
	\draw[fill=black] (4,2) circle(5pt);
	\draw[fill=black] (9,1) circle(5pt);
	\draw[fill=black] (9,2) circle(5pt);
	\draw[fill=black] (16,1) circle(5pt);
	\draw[fill=black] (16,2) circle(5pt);
	\draw[fill=black] (20,2) circle(5pt);
	\end{tikzpicture}
	
	\vspace{0.5cm}
	
	\begin{tikzpicture}[thick,scale=0.5]
	\mreza{24}{2}
	\draw[fill=black] (1,1) circle(5pt);
	\draw[fill=black] (1,2) circle(5pt);
	\draw[fill=black] (4,1) circle(5pt);
	\draw[fill=black] (4,2) circle(5pt);
	\draw[fill=black] (9,1) circle(5pt);
	\draw[fill=black] (9,2) circle(5pt);
	\draw[fill=black] (16,1) circle(5pt);
	\draw[fill=black] (16,2) circle(5pt);
	\draw[fill=black] (24,1) circle(5pt);
	\draw[fill=black] (24,2) circle(5pt);
	\end{tikzpicture}
	\caption{Strong edge geodetic set for graphs $P_{16} \cp P_2$, $P_{20} \cp P_2$ and $P_{24} \cp P_2$.}
	\label{figure:typicaloptimalexamplesstrongedgegeodeticset}
\end{figure}
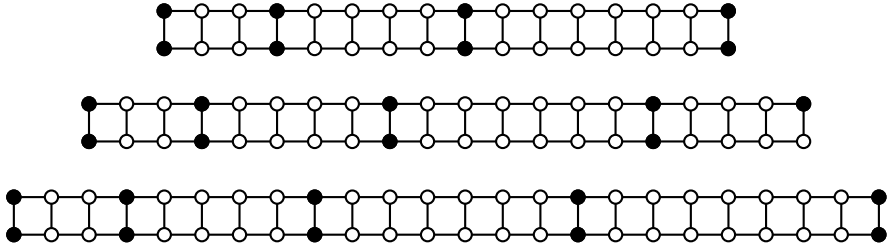

\section{Proof of Theorem~\ref{theoremsgePnP3}}
\label{sec:proofoftheoremPnP3}
We first show:

\begin{lemma}
	\label{lemmaeasyupperboundsgePnP3}
	If $n \geq 2$, then $\sge(P_n \cp P_3) \leq \left\lceil 2 \sqrt{n} ~ \right\rceil +1$.
\end{lemma}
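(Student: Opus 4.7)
The plan is to mimic the strategy used in the proof of Theorem~\ref{theoremsgePnP2}. First apply Algorithm~\ref{algorithm1verticaledgesinPnPm} with $m=3$, which produces a vertex set $S_0 \subseteq V(P_n \cp P_3)$ of cardinality $\lceil 2\sqrt{n}\,\rceil$ (by Lemma~\ref{numberceil2timessqrtn}) together with shortest paths covering every vertical edge of $P_n \cp P_3$. Write $n = k^2 + h$ with $0 \leq h \leq 2k$; in every case the vertex $b_{\ell} := (n,3)$ belongs to $S_0$ (with $\ell = k$ when $h = 0$ and $\ell = k+1$ otherwise). The proposed strong edge geodetic set is $S := S_0 \cup \{(1,2)\}$, whose size is $\lceil 2\sqrt{n}\,\rceil + 1$.

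It remains to cover all horizontal edges. For the top row, use the unique $a_1,a_k$-geodesic when $0 \leq h \leq k$ and the unique $a_1,a_{k+1}$-geodesic when $k+1 \leq h \leq 2k$; these lie entirely in row $1$ and cover the row-$1$ edges up to column $k^2$ or column $n$, respectively. In the subcase $1 \leq h \leq k$, the geodesic that Algorithm~\ref{algorithm1verticaledgesinPnPm} already assigns to the pair $a_h,b_{k+1}$ (whose task is to cover the vertical edges in column $n$) is reshaped so that its horizontal portion first traverses row $1$ from column $h^2$ to column $n$ and then climbs column $n$; since $h^2 \leq k^2$, this closes the remaining gap and row $1$ is fully covered. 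Row $3$ is handled symmetrically by the unique $b_1,b_{\ell}$-geodesic, which lies entirely in row $3$. Finally, the middle row is covered by the geodesic $(1,2) \to (2,2) \to \dots \to (n,2) \to (n,3)$ between the new vertex $(1,2)$ and $b_{\ell}$, a shortest path of length $n$ that traverses every horizontal edge of row $2$.

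After this assignment, every vertical edge of $P_n \cp P_3$ is covered by Algorithm~\ref{algorithm1verticaledgesinPnPm} and every horizontal edge by one of the geodesics just described; the remaining pairs in $\binom{S}{2}$ may be assigned arbitrary shortest paths. The only mildly delicate step is the middle subcase $1 \leq h \leq k$: one must verify that the $a_h,b_{k+1}$-geodesic can simultaneously carry the vertical edges of column $n$ and the row-$1$ horizontal edges between columns $h^2$ and $n$. This is immediate because concatenating ``horizontal along row $1$ from column $h^2$ to column $n$'' with ``vertical up column $n$ from row $1$ to row $3$'' gives a walk of length $(n-h^2)+2$, which is exactly the $a_h,b_{k+1}$-distance; this is the same re-shaping trick already used in the proof of Theorem~\ref{theoremsgePnP2}, so no new idea is required.
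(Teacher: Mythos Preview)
Your proof is correct and follows essentially the same approach as the paper. The only cosmetic difference is that the paper adds the extra vertex $(n,2)$ and connects it to $a_1=(1,1)$ via row~$2$, whereas you add $(1,2)$ and connect it to $b_\ell=(n,3)$ via row~$2$; the two choices are related by the obvious central symmetry of the grid, and all the row-$1$/row-$3$ coverings (including the reshaping of the $a_h,b_{k+1}$-geodesic in the subcase $1\le h\le k$) are identical to those in the proof of Theorem~\ref{theoremsgePnP2}.
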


\begin{proof}
	Use Algorithm~\ref{algorithm1verticaledgesinPnPm} and then cover horizontal edges in the first row and in the last row in the same way as in the proof of Theorem~\ref{theoremsgePnP2}. Add vertex $(n,2)$ to the existing vertex set from Algorithm~\ref{algorithm1verticaledgesinPnPm}, and connect it with $(1,1)$ by a geodesic that covers all horizontal edges from the second row, see Fig.~\ref{figure: covering the middle row in Pn cp P3}.
\end{proof}
	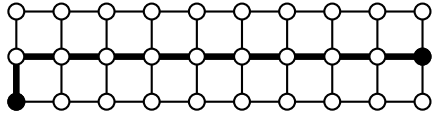
\begin{figure}[ht!]
		\centering
		\begin{tikzpicture}[thick,scale=0.6]
		\draw[line width = 2.5pt] (1,1) -- (1,2) -- (10,2);
		\mreza{10}{3}
		\draw[fill=black] (1,1) circle(5pt);
		\draw[fill=black] (10,2) circle(5pt);
		\end{tikzpicture}
		\caption{Shortest path between $(1,1)$ and $(n,2)$.}
		\label{figure: covering the middle row in Pn cp P3}
	\end{figure}

	By Lemma~\ref{lemmaeasyupperboundsgePnP3} and Lemma~\ref{lemmaeasylowerboundofPnPm} (for $m = 3$) we know that for every $n \geq 2$, the strong edge goedetic number $\sge(P_n \cp P_3)$ is either $\left\lceil 2 \sqrt{n} ~ \right\rceil$ or $\left\lceil 2 \sqrt{n} ~ \right\rceil + 1$. Because by Lemma~\ref{numberceil2timessqrtn} it holds $\left\lceil 2 \sqrt{n} ~ \right\rceil = \left\lceil 2 \sqrt{n+1} ~ \right\rceil$, except when $n=k^2$ or $n=k^2+k$ for some $k \in \N$, it is enough to prove that when $n = k^2$ or $n = k^2+k$, there is no strong edge geodetic set of size $\left\lceil 2 \sqrt{n} ~ \right\rceil$ and to find a strong edge geodetic set of size $\left\lceil 2 \sqrt{n} ~ \right\rceil$ in the other cases.
	
	First, let us show that there exists a strong edge geodetic set of size $\left\lceil 2 \sqrt{n} ~ \right\rceil$ on $P_n \cp P_3$ for $n = k^2 + h$, where $1 \leq h \leq k-1$ or $k+1 \leq h \leq 2k$.
	
	\begin{proposition}
		\label{propositionsgePnP3moreexamples}
		If $n=k^2+h$, $n \geq 3$, $k \geq 1$, where $1 \leq h \leq k-1$ or $k+1 \leq h \leq 2k$, then 
		$$\sge(P_n \cp P_3) \leq \sge(P_n \cp P_2).$$
	\end{proposition}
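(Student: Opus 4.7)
The plan is to take the vertex set $S$ produced by Algorithm~\ref{algorithm1verticaledgesinPnPm} with $m=3$ as a candidate strong edge geodetic set of $P_n \cp P_3$. By Lemma~\ref{numberceil2timessqrtn} and the counts recorded in Section~\ref{sec:proofoftheoremPnP2}, one has $|S| = \lceil 2\sqrt{n}\,\rceil = \sge(P_n \cp P_2)$ under the hypothesis on $h$. The geodesics prescribed by Algorithm~\ref{algorithm1verticaledgesinPnPm} already cover every vertical edge of $P_n \cp P_3$, and the horizontal edges of rows $1$ and $3$ can be handled exactly as in the proof of Theorem~\ref{theoremsgePnP2}, using the unique $a_1$-to-$a_{k}$ (respectively $a_1$-to-$a_{k+1}$) geodesic in row $1$ and the unique $b_1$-to-$b_{k+1}$ geodesic in row $3$, combined with the unaltered Algorithm~\ref{algorithm1verticaledgesinPnPm} path that reaches column $n$. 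The only gap is the middle row.

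The key step is to \emph{swap} the Algorithm~\ref{algorithm1verticaledgesinPnPm} geodesic assigned to the pair $\{a_1,b_{k+1}\}$ (which visits row~$2$ at the single vertex $(k^2+1,2)$) for the alternative $a_1,b_{k+1}$-geodesic of the same length $n+1$, namely
$$(1,1) \to (1,2) \to (2,2) \to \cdots \to (n,2) \to (n,3),$$
which traverses the entire middle row and covers every horizontal row-$2$ edge in one shot. The cost of the swap is that the two vertical edges of column $k^2+1$ are no longer covered, so an additional geodesic must be assigned to a previously unused pair in $S$ to recover them. For $1 \le h \le k-1$ the free pair $\{a_{h+1},b_{k+1}\}$ does the job (it is unused by Algorithm~\ref{algorithm1verticaledgesinPnPm} because $h+1>h$), and for $k+1 \le h \le 2k-1$ the free pair $\{a_{k+1},b_k\}$ works (unused because $k>h-k$). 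In both sub-cases routing the new geodesic through column $k^2+1$ is legal, which reduces to checking that $k^2+1$ lies between the column indices of the two endpoints; this is an immediate consequence of the hypotheses on $h$.

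The subtle case is $h=2k$. Here $|S|=2k+2$, yet every pair $\{a_i,b_{k+1}\}$ with $i\le k$ and every pair $\{a_{k+1},b_j\}$ with $j\le k$ is already used by Algorithm~\ref{algorithm1verticaledgesinPnPm}, so no single unused $(a,b)$-pair is available to rescue column $k^2+1$. The resolution I would use is a cascade: reassign each of the paths $\{a_2,b_{k+1}\},\ldots,\{a_k,b_{k+1}\},\{a_{k+1},b_1\},\ldots,\{a_{k+1},b_k\}$ so that it covers the column immediately to the left of its original target, pushing the gap from column $k^2+1$ down to column $n$, and finally cover column $n$ by the unique vertical geodesic between the previously free pair $\{a_{k+1},b_{k+1}\}$. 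Correctness reduces to verifying that each shifted assignment remains a shortest path, which amounts to the inequalities $i^2 \le k^2+i-1 \le n$ for $2 \le i \le k$ and $j^2 \le k^2+k+j-1 \le n$ for $1 \le j \le k$; both families hold comfortably in the regime $h=2k$. The main obstacle in the proposition is precisely this case: the middle-row swap always costs one column, and no non-cascading substitute exists exactly when $h=2k$.
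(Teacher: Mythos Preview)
Your proposal is correct and follows essentially the same strategy as the paper: run Algorithm~\ref{algorithm1verticaledgesinPnPm}, reroute the $a_1,b_{k+1}$-geodesic to sweep the entire second row, and then patch the lost column $k^2+1$ with a spare pair. For $1\le h\le k-1$ your choice of $\{a_{h+1},b_{k+1}\}$ works just as well as the paper's $\{a_k,b_{k+1}\}$, and for $k+1\le h\le 2k-1$ your direct use of the unused pair $\{a_{k+1},b_k\}$ is actually one step shorter than the paper's argument.

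The one place you diverge is $h=2k$, which you flag as ``the main obstacle'' and resolve with a full cascade of $2k-1$ reassignments. This works, but it is unnecessary: the paper's Case~2 argument handles all of $k+1\le h\le 2k$ uniformly with just two moves. Namely, the pair $\{a_{k+1},b_{k+1}\}$ is always free, and its unique geodesic covers column $n$; adding it makes the Algorithm's $\{a_{k+1},b_{h-k}\}$-geodesic (which also covered column $n$) redundant, so that pair can be reassigned directly to column $k^2+1$, since $(h-k)^2\le k^2<k^2+1\le n$. In particular, for $h=2k$ this is simply $\{a_{k+1},b_k\}$ again, only now freed up rather than initially unused. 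So your ``no non-cascading substitute exists'' diagnosis is too pessimistic: a two-step reroute suffices, and the case is not genuinely harder than the others.
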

	
	\begin{proof}
	To cover all the vertical edges and all the horizontal edges in the second row, we will adjust Algorithm~\ref{algorithm1verticaledgesinPnPm}. We will divide this adjustment into two cases.
	
	\begin{case} $n = k^2 + h$, $1 \leq h \leq k-1$ (see Fig.~\ref{figure: 1proofforupperboundinPncpP3}). \\
		We can see that in this case, because $h \leq k-1$, Algorithm~\ref{algorithm1verticaledgesinPnPm} never uses a $a_k$,$b_{k+1}$-geodesic and that a $a_1$,$b_{k+1}$-geodesic covers all vertical edges in the $(k^2+1)$-th column. Because $a_k = (k^2,1)$, we can add a $a_k$,$b_{k+1}$-geodesic that covers all the vertical edges in the $(k^2+1)$-th column and then replace the existing $a_1$,$b_{k+1}$-geodesic with the one that covers all the horizontal edges in the second row. This way we have covered all the vertical edges in $P_n \cp P_3$ and also all the horizontal edges in the second row.
		
		In the same way as in $P_n \cp P_2$ we can cover the horizontal edges in the first row (using the $a_1$,$a_k$-geodesic and also the existing $a_h$,$b_{k+1}$-geodesic) and in the third row (using the $b_1$,$b_{k+1}$-geodesic) of $P_n \cp P_3$.
		
		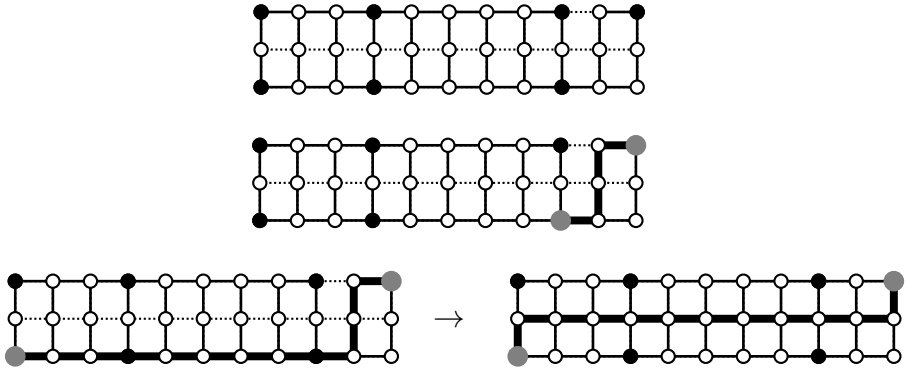
\begin{figure}[ht!]
			\centering
			\begin{tikzpicture}[thick,scale=0.5]
			\foreach \i in {1,...,10}{
				\draw[line width = 1pt] (\i,1) -- (\i,3);
			};
			\draw[line width = 1pt] (11,1) -- (11,3);
			\draw[line width = 1pt] (1,1) -- (11,1);
			\draw[line width = 1pt] (1,3) -- (9,3);
			\draw[line width = 1pt] (10,3) -- (11,3);
			\mrezacrtkano{11}{3}
			\draw[fill=black] (1,1) circle(5pt);
			\draw[fill=black] (1,3) circle(5pt);
			\draw[fill=black] (4,1) circle(5pt);
			\draw[fill=black] (4,3) circle(5pt);
			\draw[fill=black] (9,1) circle(5pt);
			\draw[fill=black] (9,3) circle(5pt);
			\draw[fill=black] (11,3) circle(5pt);
			\end{tikzpicture}
			\\
			\vspace{0.5cm}
			\begin{tikzpicture}[thick,scale=0.5]
			\foreach \i in {1,...,11}{
				\draw[line width = 1pt] (\i,1) -- (\i,3);
			};
			\draw[line width = 1pt] (1,1) -- (11,1);
			\draw[line width = 1pt] (1,3) -- (9,3);
			\draw[line width = 1pt] (10,3) -- (11,3);
			\draw[line width = 3pt] (9,1) -- (10,1) -- (10,3) -- (11,3);
			\mrezacrtkano{11}{3}
			\draw[fill=black] (1,1) circle(5pt);
			\draw[fill=black] (1,3) circle(5pt);
			\draw[fill=black] (4,1) circle(5pt);
			\draw[fill=black] (4,3) circle(5pt);
			\draw[fill=gray, color=gray] (9,1) circle(7pt);
			\draw[fill=black] (9,3) circle(5pt);
			\draw[fill=gray, color=gray] (11,3) circle(7pt);
			\end{tikzpicture}
			\\
			\vspace{0.5cm}
			\begin{tabular}{c c c}
				\adjustbox{valign=c}{
					\begin{tikzpicture}[thick,scale=0.5]
					\foreach \i in {1,...,11}{
						\draw[line width = 1pt] (\i,1) -- (\i,3);
					};
					\draw[line width = 1pt] (1,1) -- (11,1);
					\draw[line width = 1pt] (1,3) -- (9,3);
					\draw[line width = 1pt] (10,3) -- (11,3);
					\draw[line width = 3pt] (1,1) -- (10,1) --(10,3) -- (11,3);
					\mrezacrtkano{11}{3}
					\draw[fill=gray, color=gray] (1,1) circle(7pt);
					\draw[fill=black] (1,3) circle(5pt);
					\draw[fill=black] (4,1) circle(5pt);
					\draw[fill=black] (4,3) circle(5pt);
					\draw[fill=black] (9,1) circle(5pt);
					\draw[fill=black] (9,3) circle(5pt);
					\draw[fill=gray, color=gray] (11,3) circle(7pt);
					\end{tikzpicture}}
				& $\to$ &
				\adjustbox{valign=c}{
					\begin{tikzpicture}[thick,scale=0.5]
					\foreach \i in {1,...,10}{
						\draw[line width = 1pt] (\i,1) -- (\i,3);
					};
					\draw[line width = 1pt] (11,1) -- (11,3);
					\draw[line width = 1pt] (1,1) -- (11,1);
					\draw[line width = 1pt] (1,3) -- (11,3);
					\draw[line width = 3pt] (1,1) -- (1,2) -- (11,2) -- (11,3);
					\mrezacrtkano{11}{3}
					\draw[fill=gray, color=gray] (1,1) circle(7pt);
					\draw[fill=black] (1,3) circle(5pt);
					\draw[fill=black] (4,1) circle(5pt);
					\draw[fill=black] (4,3) circle(5pt);
					\draw[fill=black] (9,1) circle(5pt);
					\draw[fill=black] (9,3) circle(5pt);
					\draw[fill=gray, color=gray] (11,3) circle(7pt);
					\end{tikzpicture}}
			\end{tabular}
			\caption{Strong edge geodetic set in $P_{3^2+2} \cp P_3$.}
			\label{figure: 1proofforupperboundinPncpP3}
		\end{figure}
	\end{case}
	
	\begin{case} $n = k^2 + h$, $k+1 \leq h \leq 2k$ (see Fig.~\ref{figure: 2proofforupperboundinPncpP3}). \\
		In this case Algorithm~\ref{algorithm1verticaledgesinPnPm} does not use the $a_{k+1}$,$b_{k+1}$-geodesic. This shortest path covers all the vertical edges in the $n$-th column. Because a $a_{k+1}$,$b_h$-geodesic from Algorithm~\ref{algorithm1verticaledgesinPnPm} also covers all the vertical edges from the $n$-th column, we can add a $a_{k+1}$,$b_{k+1}$-geodesic and then replace an existing $a_{k+1}$,$b_h$-geodesic with the one that covers all the vertical edges in the $(k^2+1)$-th column. This way we can then similar to the previous case replace the existing $a_1$,$b_{k+1}$-geodesic (which covers all the vertical edges in the $(k^2+1)$-th column in Algorithm~\ref{algorithm1verticaledgesinPnPm}) with the one that covers all the horizontal edges in the second row of $P_n \cp P_3$.
		
		In the same way as in $P_n \cp P_2$ we now cover the horizontal edges in the first row (using the $a_1$,$a_{k+1}$-geodesic) and in the third row (using the $b_1$,$b_{k+1}$-geodesic) of $P_n \cp P_3$.
		
		\begin{figure}[ht!]
			\centering
			\begin{tikzpicture}[thick,scale=0.46]
			\foreach \i in {1,...,15}{
				\draw[line width = 1pt] (\i,1) -- (\i,3);
			};
			\draw[line width = 1pt] (1,1) -- (12,1);
			\draw[line width = 1pt] (13,1) -- (15,1);
			\draw[line width = 1pt] (1,3) -- (15,3);
			\mrezacrtkano{15}{3}
			\draw[fill=black] (1,1) circle(5pt);
			\draw[fill=black] (1,3) circle(5pt);
			\draw[fill=black] (4,1) circle(5pt);
			\draw[fill=black] (4,3) circle(5pt);
			\draw[fill=black] (9,1) circle(5pt);
			\draw[fill=black] (9,3) circle(5pt);
			\draw[fill=black] (15,1) circle(5pt);
			\draw[fill=black] (15,3) circle(5pt);
			\end{tikzpicture}
			\\
			\vspace{0.5cm}
			\begin{tikzpicture}[thick,scale=0.46]
			\foreach \i in {1,...,15}{
				\draw[line width = 1pt] (\i,1) -- (\i,3);
			};
			\draw[line width = 1pt] (1,1) -- (12,1);
			\draw[line width = 1pt] (13,1) -- (15,1);
			\draw[line width = 1pt] (1,3) -- (15,3);
			\draw[line width = 3pt] (15,1) -- (15,3);
			\mrezacrtkano{15}{3}
			\draw[fill=black] (1,1) circle(5pt);
			\draw[fill=black] (1,3) circle(5pt);
			\draw[fill=black] (4,1) circle(5pt);
			\draw[fill=black] (4,3) circle(5pt);
			\draw[fill=black] (9,1) circle(5pt);
			\draw[fill=black] (9,3) circle(5pt);
			\draw[fill=gray, color=gray] (15,1) circle(7pt);
			\draw[fill=gray, color=gray] (15,3) circle(7pt);
			\end{tikzpicture}
			\\
			\vspace{0.5cm}
			\begin{tabular}{c c c}
				\adjustbox{valign=c}{
					\begin{tikzpicture}[thick,scale=0.46]
					\foreach \i in {1,...,15}{
						\draw[line width = 1pt] (\i,1) -- (\i,3);
					};
					\draw[line width = 1pt] (1,1) -- (12,1);
					\draw[line width = 1pt] (13,1) -- (15,1);
					\draw[line width = 1pt] (1,3) -- (15,3);
					\draw[line width = 3pt] (9,3) -- (15,3) -- (15,1);
					\mrezacrtkano{15}{3}
					\draw[fill=black] (1,1) circle(5pt);
					\draw[fill=black] (1,3) circle(5pt);
					\draw[fill=black] (4,1) circle(5pt);
					\draw[fill=black] (4,3) circle(5pt);
					\draw[fill=black] (9,1) circle(5pt);
					\draw[fill=gray, color=gray] (9,3) circle(7pt);
					\draw[fill=gray, color=gray] (15,1) circle(7pt);
					\draw[fill=black] (15,3) circle(5pt);
					\end{tikzpicture}}
				& $\to$ &
				\adjustbox{valign=c}{
					\begin{tikzpicture}[thick,scale=0.46]
					\foreach \i in {1,...,15}{
						\draw[line width = 1pt] (\i,1) -- (\i,3);
					};
					\draw[line width = 1pt] (1,1) -- (12,1);
					\draw[line width = 1pt] (13,1) -- (15,1);
					\draw[line width = 1pt] (1,3) -- (15,3);
					\draw[line width = 3pt] (9,3) -- (10,3) -- (10,1) -- (15,1);
					\mrezacrtkano{15}{3}
					\draw[fill=black] (1,1) circle(5pt);
					\draw[fill=black] (1,3) circle(5pt);
					\draw[fill=black] (4,1) circle(5pt);
					\draw[fill=black] (4,3) circle(5pt);
					\draw[fill=black] (9,1) circle(5pt);
					\draw[fill=gray, color=gray] (9,3) circle(7pt);
					\draw[fill=gray, color=gray] (15,1) circle(7pt);
					\draw[fill=black] (15,3) circle(5pt);
					\end{tikzpicture}}
				\\
			\end{tabular}
			\\
			\vspace{0.5cm}
			\begin{tabular}{c c c}
				\adjustbox{valign=c}{
					\begin{tikzpicture}[thick,scale=0.46]
					\foreach \i in {1,...,15}{
						\draw[line width = 1pt] (\i,1) -- (\i,3);
					};
					\draw[line width = 1pt] (1,1) -- (15,1);
					\draw[line width = 1pt] (1,3) -- (14,3);
					\draw[line width = 3pt] (1,1) -- (10,1) -- (10,3) -- (15,3);
					\mrezacrtkano{15}{3}
					\draw[fill=gray, color=gray] (1,1) circle(7pt);
					\draw[fill=black] (1,3) circle(5pt);
					\draw[fill=black] (4,1) circle(5pt);
					\draw[fill=black] (4,3) circle(5pt);
					\draw[fill=black] (9,1) circle(5pt);
					\draw[fill=black] (9,3) circle(5pt);
					\draw[fill=gray, color=gray] (15,3) circle(7pt);
					\draw[fill=black] (15,1) circle(5pt);
					\end{tikzpicture}}
				& $\to$ &
				\adjustbox{valign=c}{
					\begin{tikzpicture}[thick,scale=0.46]
					\foreach \i in {1,...,15}{
						\draw[line width = 1pt] (\i,1) -- (\i,3);
					};
					\draw[line width = 1pt] (1,1) -- (15,1);
					\draw[line width = 1pt] (1,3) -- (15,3);
					\draw[line width = 3pt,] (1,1) -- (1,2) -- (15,2) -- (15,3);
					\mrezacrtkano{15}{3}
					\draw[fill=gray, color=gray] (1,1) circle(7pt);
					\draw[fill=black] (1,3) circle(5pt);
					\draw[fill=black] (4,1) circle(5pt);
					\draw[fill=black] (4,3) circle(5pt);
					\draw[fill=black] (9,1) circle(5pt);
					\draw[fill=black] (9,3) circle(5pt);
					\draw[fill=gray, color=gray] (15,3) circle(7pt);
					\draw[fill=black] (15,1) circle(5pt);
					\end{tikzpicture}}
			\end{tabular}
			\caption{Strong edge geodetic set in $P_{3^2+2 \cdot 3} \cp P_3$.}
			\label{figure: 2proofforupperboundinPncpP3}
		\end{figure}
	\end{case}
	
	In both cases we have adjusted Algorithm~\ref{algorithm1verticaledgesinPnPm} such that all the vertical edges together with all the horizontal edges are covered without changing the set of vertices used in the algorithm. It follows that $\sge(P_n \cp P_3) \leq \left\lceil 2 \sqrt{n} ~ \right\rceil = \left\lceil 2 \sqrt{n+1} ~ \right\rceil$ for all $n \in \N$ when $n \not= k^2$ or $n \not= k^2+k$ for some $k \in \N$.
	\end{proof}

	In the second part of the proof we will prove that there is no strong edge geodetic set of size $\left\lceil 2 \sqrt{n} ~ \right\rceil$ if $n = k^2$ or $n = k^2 + k$.
	
	\begin{proposition}
		\label{propositionsgePnP3specialexamples}
		If $n = k^2$ or $n = k^2 + k$ for some $k \geq 2$, $k \in \N$, then
		$$\sge(P_n \cp P_3) > \sge(P_n \cp P_2).$$
	\end{proposition}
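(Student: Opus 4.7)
My plan is a proof by contradiction: assume $P_n \cp P_3$ admits a strong edge geodetic set $S$ with $|S|=\lceil 2\sqrt{n}\,\rceil$, which is $2k$ when $n=k^2$ and $2k+1$ when $n=k^2+k$, and exhibit in each admissible configuration a row-2 horizontal edge that no assigned geodesic can cover.

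First I would pin down the row distribution of $S$. Writing $s_i=|S\cap(\text{row }i)|$, the convex edge-cut inequality from \cite[Corollary 6.4]{manuel2017segpin} underlying Lemma~\ref{lemmaeasylowerboundofPnPm}, applied to both horizontal cuts, gives $s_1(s_2+s_3)\ge n$ and $(s_1+s_2)s_3\ge n$, since every cut-crossing geodesic is monotone and therefore covers exactly one edge of the cut. Combined with $s_1+s_2+s_3=\lceil 2\sqrt n\,\rceil$, a short calculation forces $(s_1,s_2,s_3)=(k,0,k)$ when $n=k^2$, and $(s_1,s_2,s_3)\in\{(k,0,k+1),(k+1,0,k),(k,1,k)\}$ when $n=k^2+k$. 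In each case these inequalities saturate, so each pair of $S$ crossing a given cut covers exactly one, distinct, vertical edge of that cut.

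The principal case is $s_2=0$. Let $a_1<\dots<a_{s_1}$ and $b_1<\dots<b_{s_3}$ be the column positions of $S$ in rows 1 and 3, and for each pair $\{(a_i,1),(b_j,3)\}$ let $c_{ij}$ and $d_{ij}$ be the columns of the two vertical steps of its chosen geodesic. Saturation makes both $(c_{ij})$ and $(d_{ij})$ permutations of $\{1,\dots,n\}$, each value in $[\min(a_i,b_j),\max(a_i,b_j)]$, with $c_{ij}\le d_{ij}\iff a_i\le b_j$. I claim the row-2 edge between $(1,2)$ and $(2,2)$ is uncovered. Column~1 must be reached, so without loss of generality $a_1=1$. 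When $b_1\ge 2$, any pair with $d_{ij}=1$ has $\min(a_i,b_j)=1$ and hence $i=1$, and then $c_{1j}\le d_{1j}=1$ forces $c_{1j}=1$; by injectivity of the $c$-permutation this coincides with the unique pair attaining $c=1$, whose geodesic is the vertical path at column~1 and covers no row-2 edge. Every other pair has $c_{ij},d_{ij}\ge 2$, so does not visit $(1,2)$ in row~2, and the edge is uncovered. The subcase $b_1=1$ is similar, using that the pair $\{(1,1),(1,3)\}$ is itself forced to be the vertical geodesic at column~1.

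The main obstacle is the remaining case $(s_1,s_2,s_3)=(k,1,k)$ for $n=k^2+k$, because the middle-row vertex at some column $v$ enlarges the pool of potential covers. I split on $v$. If $v\in\{2,\dots,n-1\}$, then $\min(v,b_j)\ge 2$ and $\min(a_i,v)\ge 2$ in every relevant pair (again WLOG $a_1=1,\,b_1\ge 2$), so $d=1$ in the upper cut can only be realised by an $(a_1,b_j)$-pair, and the injectivity argument from the previous paragraph shows the edge $(1,2)$--$(2,2)$ is uncovered. If $v=1$, then $v$-pairs can cover column~1, but by the mirror argument at the opposite end the edge $(n-1,2)$--$(n,2)$ is uncovered: $(a_i,v)$ and $(v,b_j)$ geodesics are monotone from/to column~$1$ in row~$2$, so covering $c=n$ or $d=n$ via such a pair forces an incompatibility with the other cut's column~$n$, and column~$n$ in both cuts must be supplied by a single $(a,b)$-pair with $c=d=n$, that is, a vertical geodesic at column~$n$; as no other pair then reaches both $(n-1,2)$ and $(n,2)$, the edge at the right end is uncovered. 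The case $v=n$ is symmetric. Thus every admissible configuration yields an uncovered row-2 edge, contradicting that $S$ is a strong edge geodetic set.
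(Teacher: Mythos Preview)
Your argument is correct and follows a genuinely different route from the paper. The paper bounds the total number of vertical edges coverable by a set with row-counts $(a,b,c)$ via the function $f_s(a,b,c)=ab+bc+2ac$, computes its maxima under constraints such as $b\ge 1$ or $b\ge 2$, and then shows that either a middle-row vertex is forced (lowering the maximum below $2n$) or the edge $(1,1)(1,2)$ is necessarily double-covered (again lowering the effective count). Your approach instead uses the two convex edge-cuts directly: the inequalities $s_1(s_2+s_3)\ge n$ and $(s_1+s_2)s_3\ge n$ together with $s_1+s_2+s_3=\lceil 2\sqrt n\,\rceil$ pin down the row distribution and force both inequalities to be equalities, so the cut-crossing columns form bijections onto $\{1,\dots,n\}$; you then exploit this rigidity locally to trap an uncovered row-2 edge. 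The paper's counting argument is shorter and extends more mechanically to $P_n\cp P_4$ (Table~\ref{tablemaxfsPnP4}), while your bijective argument gives more structural information about where the obstruction actually lives.

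A few points of exposition deserve tightening. In the subcase $v\in\{2,\dots,n-1\}$ with $(s_1,s_2,s_3)=(k,1,k)$, the claim ``$\min(a_i,v)\ge 2$'' is false for $i=1$ since $a_1=1$; what you really use is that the unique lower-cut crossing at column $1$ must come from an $(a_1,b_j)$ pair (as $\min(v,b_j)\ge 2$ rules out $(v,b_j)$), which then forces $c_{1j}=d_{1j}=1$ and by bijectivity of the upper-cut crossings excludes $c_1^a=1$ for the $(a_1,v)$ pair as well. You also write ``WLOG $a_1=1,\ b_1\ge 2$'' without handling $a_1=b_1=1$ explicitly, though that subcase is easier (the pair $\{(1,1),(1,3)\}$ is the forced vertical geodesic at column $1$ and bijectivity immediately excludes any other crossing there). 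Finally, the phrase ``$d=1$ in the upper cut'' conflates notation; and the $v=1$ paragraph, while ultimately correct, is compressed to the point of being hard to parse --- spelling out that the pair realising the lower-cut crossing at column $n$ is forced to also realise the upper-cut crossing at column $n$ (hence $c=d=n$, a vertical geodesic), and that bijectivity then forbids $c_k^a=n$ and $d_k^b=n$, would make the argument transparent.
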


	\begin{proof}
	Let us define the function
	$$f_s (a,b,c) = a b + b c + 2ac$$
	which gives an upper bound on how many different vertical edges can be covered with shortest paths between $s$ vertices, where $a$ vertices lie in the first row, $b$ vertices in the second row, and $c$ vertices in the third row in $P_n \cp P_3$, and for each pair of these vertices we use at most one shortest path. The extremes of $f_s$ can be obtained by computer. For example, if $a \geq 0$, $b \geq 0$, and $c \geq 0$, $a,b,c \in \R$, then the maximum of $f_s$ is equal to $s^2/2$ and if $a \geq 0$, $b \geq 1$, and $c \geq 0$, $a,b,c \in \R$, then the maximum of $f_s$ is equal to $(s^2-1)/2$. We prove Proposition~\ref{propositionsgePnP3specialexamples} by assuming the opposite in each case.
	
	\setcaseprenumber{1}
	\begin{case} $n = k^2$. \\
		Suppose that there exists a strong edge geodetic set  $S$ in $P_n \cp P_3$ with $2k$ elements. If this set contains a vertex from the second row (in which case $b \geq 1$), $f_s(a,b,c)$ is at most $(s^2-1)/2$, which is in our case (when $s=2k$) equal to $(4k^2-1)/2$. This is less than the number $2k^2$ of all the vertical edges in $P_n \cp P_2$, which in turn means that if there exists such a strong edge geodetic set, then all the vertices from it are in the first and the third row ($b=0$).
		
		Now consider the edge $(1,2)(2,2)$. A shortest path that covers this edge has to have one endpoint at $(1,1)$ or $(1,3)$, which means that at least one of these vertices is in $S$. Without lost of generality we can assume that it has one endpoint at $(1,1)$ (Fig.~\ref{figureforn=k^2inPnP3}a). This shortest path then also covers the edge $(1,1)(1,2)$. If $(1,3) \in S$, then the edge $(1,1)(1,2)$ is also covered with the unique $(1,1)$,$(1,3)$-geodesic (Fig.~\ref{figureforn=k^2inPnP3}b). Otherwise, the shortest path that covers the edge $(1,2)(1,3)$ also covers the edge $(1,1)(1,2)$ (Fig.~\ref{figureforn=k^2inPnP3}c). In both ways the edge $(1,1)(1,2)$ is covered at least twice, which means that with the set of $2k$ vertices we can only find shortest paths between them that cover at most $\max(f_{2k} - 1) \leq 2k^2-1$ different vertical edges which is again less that the number $2k^2$ of all the vertical edges in $P_n \cp P_2$. This implies that such a strong edge geodetic set $S$ cannot exist.
	
		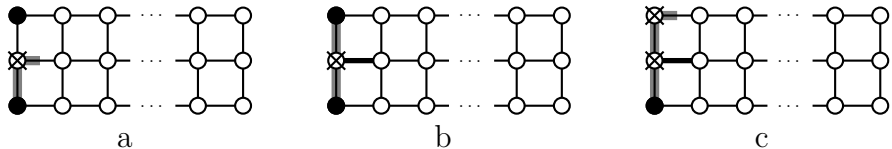
\begin{figure}[ht!]
			\centering	
			\begin{tabular}{c c c c c}
				\adjustbox{valign=c}{
					\begin{tikzpicture}[thick,scale=0.6]
					\draw[line width = 3.5pt, color=gray] (1,1) -- (1,2) -- (1.5,2);
					\mreza{0}{3}
					\draw[line width = 0.8 pt] (0.8,2.2) -- (1.2,1.8);
					\draw[line width = 0.8 pt] (1.2,2.2) -- (0.8,1.8);
					\draw[fill=black] (1,1) circle(5pt);
					\draw[fill=black] (1,3) circle(5pt);
					\end{tikzpicture}}		
				& &
				\adjustbox{valign=c}{
					\begin{tikzpicture}[thick,scale=0.6]
					\draw[line width = 2pt] (1,2) -- (2,2);
					\draw[line width = 3.5pt, color=gray] (1,1) -- (1,3);
					\mreza{0}{3}
					\draw[line width = 0.8 pt] (0.8,2.2) -- (1.2,1.8);
					\draw[line width = 0.8 pt] (1.2,2.2) -- (0.8,1.8);
					\draw[fill=black] (1,1) circle(5pt);
					\draw[fill=black] (1,3) circle(5pt);
					\end{tikzpicture}}
				& &
				\adjustbox{valign=c}{
					\begin{tikzpicture}[thick,scale=0.6]
					\draw[line width = 2pt] (1,2) -- (2,2);
					\draw[line width = 3.5pt, color=gray] (1,1) -- (1,3) -- (1.5,3);
					\mreza{0}{3}
					\draw[line width = 0.8 pt] (0.8,2.2) -- (1.2,1.8);
					\draw[line width = 0.8 pt] (1.2,2.2) -- (0.8,1.8);
					\draw[line width = 0.8 pt] (0.8,3.2) -- (1.2,2.8);
					\draw[line width = 0.8 pt] (1.2,3.2) -- (0.8,2.8);
					\draw[fill=black] (1,1) circle(5pt);
					\end{tikzpicture}}
				\\
				a & & b & & c
			\end{tabular}
			\caption{Covering the edge $(1,2)(2,2)$ in $P_n \cp P_3$ when $n = k^2$.}
			\label{figureforn=k^2inPnP3}
		\end{figure}  
	\end{case}
	
	\begin{case} $n = k^2 + k$. \\
		Suppose that there exist a strong edge geodetic set $S$ of $P_n \cp P_3$ with $2k+1$ elements. If this set does not include a vertex $(1,2)$, we can similarly as in the previous case without loss of generality conclude that the edge $(1,1)(1,2)$ is covered at least twice, which implies that with a set of $2k+1$ vertices, we can only find shortest paths between them that cover at most $\max(f_{2k+1} - 1) \leq (2k+1)^2/2-1 = 2k^2+2k-1/2$ different vertical edges which is less that the number $2(k^2+2)$ of all the vertical edges in $P_n \cp P_3$. This implies that if such a strong edge geodetic set in $P_n \cp P_3$ exists, it includes the vertex $(1,2)$. By symmetry, it also includes the vertex $(n,2)$. But then, because $b \geq 2$, the value of $f_{2k+1}(a,b,c)$ is less or equal to $((2k+1)^2-4)/2$ which is again less than the number $2k^2 + 2k$ of different vertical edges in $P_n \cp P_3$.
	\end{case}
	Since in both cases we got a contradiction, we conclude that $\sge(P_n \cp P_3) > \sge(P_n \cp P_2)$.
	\end{proof}

	Propositions~\ref{propositionsgePnP3moreexamples} and \ref{propositionsgePnP3specialexamples}, together with Lemma~\ref{lemmaeasylowerboundofPnPm} for $m = 3$, and Lemma~\ref{lemmaeasyupperboundsgePnP3} imply Theorem~\ref{theoremsgePnP3}.

\section{Proof of Theorem~\ref{theoremsgePnP4}}
\label{sec:proofoftheoremPnP4}

\begin{lemma}
	\label{lemmaeasyupperboundsgePnP4}
	$\sge(P_n \cp P_4) \leq \left\lceil 2 \sqrt{n} ~ \right\rceil +1$.
\end{lemma}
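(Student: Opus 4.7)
The plan mirrors the proof of Lemma~\ref{lemmaeasyupperboundsgePnP3}, adapted to handle the two interior rows of $P_n \cp P_4$ with a single additional vertex. I first run Algorithm~\ref{algorithm1verticaledgesinPnPm} with $m=4$ to obtain a vertex set of size $\lceil 2\sqrt{n}\,\rceil$ (by Lemma~\ref{numberceil2timessqrtn}) lying entirely in rows $1$ and $4$, together with an assignment of geodesics between these vertices that covers all vertical edges. The horizontal edges in rows $1$ and $4$ can then be covered exactly as in the proof of Theorem~\ref{theoremsgePnP2}: the $a_1,a_k$- (or $a_1,a_{k+1}$-) geodesic, possibly together with one of the already-existing $a_j,b_{k+1}$-geodesics, covers row $1$, and the analogous $b$-geodesic covers row $4$.

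The remaining task is to cover the horizontal edges in rows $2$ and $3$. I augment the vertex set by the single vertex $(n,2)$, yielding a total of $\lceil 2\sqrt{n}\,\rceil + 1$ vertices. Row $2$ is then covered by the $a_1,(n,2)$-geodesic $(1,1)(1,2)(2,2)\cdots(n,2)$, which has length $n$ and is therefore a shortest path. Row $3$ is covered by the $b_1,(n,2)$-geodesic $(1,4)(1,3)(2,3)\cdots(n,3)(n,2)$, which has length $n+1$ and is again shortest, since the distance from $(1,4)$ to $(n,2)$ in $P_n \cp P_4$ equals $(n-1)+2$.

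It remains to specify geodesics for the other pairs involving $(n,2)$; any choice works, since the union over all assigned pairs already contains every edge of $P_n \cp P_4$. Thus the constructed set of $\lceil 2\sqrt{n}\,\rceil + 1$ vertices is a strong edge geodetic set, proving the lemma. The only routine point that needs to be checked is that the two geodesics through $(n,2)$ are indeed shortest and jointly cover all horizontal edges in the middle two rows, which is immediate from the explicit descriptions above; there is no substantive obstacle beyond this verification.
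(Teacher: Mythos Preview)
Your proof is correct and essentially identical to the paper's: both run Algorithm~\ref{algorithm1verticaledgesinPnPm} for $m=4$, cover rows $1$ and $4$ as in Theorem~\ref{theoremsgePnP2}, then add the single vertex $(n,2)$ and use the $a_1,(n,2)$-geodesic through row $2$ and the $b_1,(n,2)$-geodesic through row $3$. Your explicit descriptions of these two geodesics and the length check are a small bonus over the paper's figure-based presentation.
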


\begin{proof}
	Use Algorithm~\ref{algorithm1verticaledgesinPnPm} and cover horizontal edges in the first row and in the last row in the same way as in the proof of Theorem~\ref{theoremsgePnP2}. To the existing vertex set from Algorithm~\ref{algorithm1verticaledgesinPnPm}, add vertex $(n,2)$ and connect it with $(1,1)$ by a geodesic that covers all horizontal edges from the second row (Fig.~\ref{figurelemmasgePnP4easyupperbound}a) and with $(1,4)$ by a geodesic that covers all horizontal edges from the third row (Fig.~\ref{figurelemmasgePnP4easyupperbound}b).
	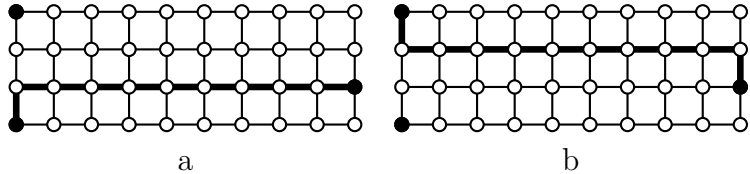
\begin{figure}[ht!]
		\centering
		\begin{tabular}{c c}
			\begin{tikzpicture}[thick,scale=0.5]
			\draw[line width = 2.5pt] (1,1) -- (1,2) -- (10,2);
			\mreza{10}{4}
			\draw[fill=black] (1,1) circle(5pt);
			\draw[fill=black] (1,4) circle(5pt);
			\draw[fill=black] (10,2) circle(5pt);
			\end{tikzpicture}
			&
			\begin{tikzpicture}[thick,scale=0.5]
			\draw[line width = 2.5pt] (1,4) -- (1,3) -- (10,3) -- (10,2);
			\mreza{10}{4}
			\draw[fill=black] (1,1) circle(5pt);
			\draw[fill=black] (1,4) circle(5pt);
			\draw[fill=black] (10,2) circle(5pt);
			\end{tikzpicture}
			\\
			a & b
		\end{tabular}
		\caption{$(1,1)$,$(n,2)$-geodesic and $(1,4)$,$(n,2)$-geodesic.}
		\label{figurelemmasgePnP4easyupperbound}
	\end{figure}
\end{proof}

	By Lemma~\ref{lemmaeasyupperboundsgePnP4} and Lemma~\ref{lemmaeasylowerboundofPnPm} for $m=4$, we know that for every $n \geq 2$, the strong edge geodetic number $\sge(P_n \cp P_4)$ is either $\left\lceil 2 \sqrt{n} ~ \right\rceil$ or $\left\lceil 2 \sqrt{n} ~ \right\rceil + 1$. From Lemma~\ref{numberceil2timessqrtn} we see that Theorem~\ref{theoremsgePnP4} says that $\sge(P_n \cp P_4) = \left\lceil 2 \sqrt{n} ~ \right\rceil$ except when $n=k^2$, $n=k^2+k$, or $n=k^2+2k$ for some $k \in \N$. Hence it is enough to prove that when $n = k^2$, $n = k^2+k$, or $n=k^2+2k$ there is no strong edge geodetic set of size $\left\lceil 2 \sqrt{n} ~ \right\rceil$ and to find a strong edge geodetic set of size $\left\lceil 2 \sqrt{n} ~ \right\rceil$ in the other cases.
	
	First, let us show that there exists a strong edge geodetic set of size $\left\lceil 2 \sqrt{n} ~ \right\rceil$ for $P_n \cp P_4$ when $n = k^2 + h$ for some $k \in \N$, where $1 \leq h \leq k-1$, or $k+1 \leq h \leq 2k-1$.
	
	\begin{proposition}
		\label{propositionsgePnP4moreexamples}
		If $n=k^2+h$, $n \geq 3$, $k \geq 1$, where $1 \leq h \leq k-1$ or $k+1 \leq h \leq 2k-1$, then 
		$$\sge(P_n \cp P_4) \leq \sge(P_n \cp P_2).$$
	\end{proposition}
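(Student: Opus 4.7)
The plan is to mirror the two‑case structure of Proposition~\ref{propositionsgePnP3moreexamples}, but with the added challenge that $P_n\cp P_4$ contains two middle rows (rows~$2$ and $3$) whose horizontal edges need to be covered, rather than the single middle row of $P_n\cp P_3$. Accordingly, in each case I must produce \emph{two} long column‑spanning geodesics, one passing through row~$2$ and one through row~$3$.

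For Case~$2$ ($k+1\le h\le 2k-1$) I would keep the set $S=\{a_1,\dots,a_{k+1},b_1,\dots,b_{k+1}\}$ produced by Algorithm~\ref{algorithm1verticaledgesinPnPm}, so that $|S|=2k+2=\lceil 2\sqrt{n\,}\rceil$. The goal is to free both the $(a_1,b_{k+1})$‑ and $(a_{k+1},b_1)$‑geodesics by redistributing the algorithm's column assignments: for $j=2,\dots,k$ I let $(a_j,b_{k+1})$ cover column $k^2+j-1$ (rather than $k^2+j$); I use $(a_{k+1},b_k)$, which Algorithm~\ref{algorithm1verticaledgesinPnPm} does not use when $h\le 2k-1$, to cover column $k^2+k$; for $j=2,\dots,h-k$ I let $(a_{k+1},b_j)$ cover column $k^2+k+j-1$; and the unique vertical $(a_{k+1},b_{k+1})$‑geodesic takes care of column $n$. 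The freed pair $(a_1,b_{k+1})$ is then routed via $(1,1)-(1,2)-(n,2)-(n,4)$ to cover row~$2$, and $(a_{k+1},b_1)$ via $(n,1)-(n,3)-(1,3)-(1,4)$ to cover row~$3$; rows~$1$ and~$4$ are handled by the unique $(a_1,a_{k+1})$‑ and $(b_1,b_{k+1})$‑geodesics exactly as in the proof of Theorem~\ref{theoremsgePnP2}.

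For Case~$1$ ($1\le h\le k-1$) the set from Algorithm~\ref{algorithm1verticaledgesinPnPm} adds only $b_{k+1}=(n,4)$, so $(a_1,b_{k+1})$ is the only pair whose geodesic can run from column $1$ to column $n$ through the middle of the grid, and it alone cannot supply covers for both middle rows. I would therefore replace $b_{k+1}$ by the middle‑row vertex $v=(n,2)$, taking $S=\{a_1,\dots,a_k,b_1,\dots,b_k,v\}$ of the required size $2k+1=\lceil 2\sqrt{n\,}\rceil$. Then $(a_1,v)$ routed via $(1,1)-(1,2)-(n,2)$ covers row~$2$, and $(b_1,v)$ routed via $(1,4)-(1,3)-(n,3)-(n,2)$ covers row~$3$ (and also the middle vertical edge of column~$n$). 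The non‑square columns in $[1,k^2]$ are covered by the $(a_i,b_j)$‑pairs with $i\ne j$ exactly as in Algorithm~\ref{algorithm1verticaledgesinPnPm}. For each intermediate column $c=k^2+j-1$ with $j=2,\dots,h$ I pair the geodesic $(a_j,v)$ routed via $(j^2,1)-(c,1)-(c,2)-(n,2)$ with $(b_j,v)$ routed via $(j^2,4)-(c,4)-(c,3)-(c,2)-(n,2)$, so that the three vertical edges of column~$c$ are covered between them. Finally, $(a_k,v)$ routed via $(k^2,1)-(n,1)-(n,2)$ and $(b_k,v)$ routed via $(k^2,4)-(n,4)-(n,3)-(n,2)$ cover column~$n$ together with the tails of rows~$1$ and~$4$ between columns $k^2$ and $n$, while $(a_1,a_k)$ and $(b_1,b_k)$ cover rows~$1$ and~$4$ up to column $k^2$. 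All remaining pairs in $\binom{S}{2}$ are assigned arbitrary shortest paths. The main obstacle will be the bookkeeping of Case~$1$: one must verify that each specified route is indeed a shortest path (for example, the $(a_j,v)$‑route passes through column~$c$ because $j^2\le c$, which follows from $j\le h\le k-1$) and that the union of the chosen geodesics leaves no edge uncovered; the assumption $h\le k-1$ is exactly what guarantees that the $2(h-1)$ pairs $\{(a_j,v),(b_j,v):2\le j\le h\}$ exist in $S$ and suffice for the intermediate columns.
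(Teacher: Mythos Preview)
Your proposal is correct and follows essentially the same two-case strategy as the paper. In Case~2 the paper frees the pairs $(a_1,b_{k+1})$ and $(a_{k+1},b_1)$ by a minimal sequence of swaps (adding the unused $(a_{k+1},b_{k+1})$- and $(a_{k+1},b_k)$-geodesics and re-routing just two existing paths), whereas you achieve the same effect by shifting every ``extra'' geodesic one column to the left; both manoeuvres are valid and lead to the same freed pair covering rows~2 and~3. In Case~1 the paper replaces $b_{k+1}$ by $c=(n,3)$ rather than your $v=(n,2)$ (a symmetric choice) and indexes the covering of column $k^2+j$ with the pairs $(a_{j+1},c),(b_{j+1},c)$ for $j=1,\dots,h$, so that column~$n$ is absorbed into the general loop via index $h+1$ instead of your separate use of index~$k$; again the difference is purely cosmetic.
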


	\begin{proof}
	To cover all the vertical edges and the horizontal edges in the second row, we will adjust Algorithm~\ref{algorithm1verticaledgesinPnPm}. We divide this adjustment into two cases.
	
	\setcaseprenumber{1}
	\begin{case} $n = k^2 + h$, $1 \leq h \leq k-1$ (see Fig.~\ref{figurepropositionlowerboundPnP4case1example1}). \\
		First, let us use Algorithm~\ref{algorithm1verticaledgesinPnPm}.  In the next step replace the existing vertex $b_{k+1} = (k^2,1)$ with the vertex $c =(n,3)$ and for $j = h,\dots, 1$ replace the existing $a_j$,$b_{k+1}$-geodesic with two shortest paths, the $a_{j+1}$,$c$-geodesic (it exists because $h \leq k-1$) that covers the edges $(k^2+j,1)(k^2+j,2)$ and $(k^2+j,2)(k^2+j,3)$, and the $b_{j+1}$,$c$-geodesic that covers the edge $(k^2+j,3)(k^2+j,4)$. This replacement is well defined because we replaced all the shortest paths that had one endpoint at $b_{k+1}$ and added some new shortest paths that have one endpoint at $c$, so the condition that for each pair of vertices from the set $\{a_1,\dots,a_k,b_1,\dots,b_k,c\}$ we only use at most one shortest path, still holds.
		
				\begin{figure}[ht!]
			\centering
			\begin{tabular}{c c c c c}
				& & \begin{tikzpicture}[thick,scale=0.35]
				\foreach \i in {1,...,10}{
					\draw[line width = 1.5pt] (\i,1) -- (\i,4);
				};
				\draw[line width = 1.5pt] (11,1) -- (11,4);
				\draw[line width = 1.5pt] (1,1) -- (11,1);
				\draw[line width = 1.5pt] (1,4) -- (9,4);
				\draw[line width = 1.5pt] (10,4) -- (11,4);
				\mrezacrtkano{11}{4}
				\draw[fill=black] (1,1) circle(5pt);
				\draw[fill=black] (1,4) circle(5pt);
				\draw[fill=black] (4,1) circle(5pt);
				\draw[fill=black] (4,4) circle(5pt);
				\draw[fill=black] (9,1) circle(5pt);
				\draw[fill=black] (9,4) circle(5pt);
				\draw[fill=black] (11,4) circle(5pt);
				\end{tikzpicture}
				& & \\
				& & & & \\
				\adjustbox{valign=c}{
					\begin{tikzpicture}[thick,scale=0.35]
					\foreach \i in {1,...,9}{
						\draw[line width = 1.5pt] (\i,1) -- (\i,4);
					};
					\draw[line width = 1.5pt] (1,1) -- (9,1);
					\draw[line width = 1.5pt] (1,4) -- (9,4);
					\mrezacrtkano{11}{4}
					\draw[fill=black] (1,1) circle(5pt);
					\draw[fill=black] (1,4) circle(5pt);
					\draw[fill=black] (4,1) circle(5pt);
					\draw[fill=black] (4,4) circle(5pt);
					\draw[fill=black] (9,1) circle(5pt);
					\draw[fill=black] (9,4) circle(5pt);
					\draw[color=gray, fill=gray] (11,4) circle(9pt);
					\end{tikzpicture}}
				& $\to$ &
				\adjustbox{valign=c}{
					\begin{tikzpicture}[thick,scale=0.35]
					\foreach \i in {1,...,9}{
						\draw[line width = 1.5pt] (\i,1) -- (\i,4);
					};
					\draw[line width = 1.5pt] (1,1) -- (9,1);
					\draw[line width = 1.5pt] (1,4) -- (9,4);
					\mrezacrtkano{11}{4}
					\draw[fill=black] (1,1) circle(5pt);
					\draw[fill=black] (1,4) circle(5pt);
					\draw[fill=black] (4,1) circle(5pt);
					\draw[fill=black] (4,4) circle(5pt);
					\draw[fill=black] (9,1) circle(5pt);
					\draw[fill=black] (9,4) circle(5pt);
					\draw[color=gray, fill=gray] (11,3) circle(9pt);
					\end{tikzpicture}}
				& & \\
				& & & & \\
				\adjustbox{valign=c}{
					\begin{tikzpicture}[thick,scale=0.35]
					\foreach \i in {1,...,11}{
						\draw[line width = 1pt] (\i,1) -- (\i,4);
					};
					\draw[line width = 1pt] (11,1) -- (11,4);
					\draw[line width = 1pt] (1,1) -- (11,1);
					\draw[line width = 1pt] (1,4) -- (9,4);
					\draw[line width = 1pt] (10,4) -- (11,4);
					\draw[line width = 3pt] (4,1) -- (11,1) -- (11,4);
					\mrezacrtkano{11}{4}
					\draw[fill=gray, color=gray] (4,1) circle(7pt);
					\draw[fill=black] (1,4) circle(5pt);
					\draw[fill=black] (1,1) circle(5pt);
					\draw[fill=black] (4,4) circle(5pt);
					\draw[fill=black] (9,1) circle(5pt);
					\draw[fill=black] (9,4) circle(5pt);
					\draw[fill=gray, color=gray] (11,4) circle(7pt);
					\end{tikzpicture}}
				& $\to$ &
				\adjustbox{valign=c}{
					\begin{tikzpicture}[thick,scale=0.35]
					\foreach \i in {1,...,9}{
						\draw[line width = 1pt] (\i,1) -- (\i,4);
					};
					\draw[line width = 1pt] (1,1) -- (9,1);
					\draw[line width = 1pt] (1,4) -- (9,4);
					\draw[line width = 3pt] (9,1) -- (11,1) --(11,3);
					\mrezacrtkano{11}{4}
					\draw[fill=gray, color=gray] (9,1) circle(7pt);
					\draw[fill=black] (1,4) circle(5pt);
					\draw[fill=black] (4,1) circle(5pt);
					\draw[fill=black] (4,4) circle(5pt);
					\draw[fill=black] (1,1) circle(5pt);
					\draw[fill=black] (9,4) circle(5pt);
					\draw[fill=gray, color=gray] (11,3) circle(7pt);
					\end{tikzpicture}}
				& $+$ &
				\adjustbox{valign=c}{
					\begin{tikzpicture}[thick,scale=0.35]
					\foreach \i in {1,...,9}{
						\draw[line width = 1pt] (\i,1) -- (\i,4);
					};
					\draw[line width = 1pt] (11,1) -- (11,3);
					\draw[line width = 1pt] (1,1) -- (11,1);
					\draw[line width = 1pt] (1,4) -- (9,4);
					\draw[line width = 3pt] (9,4) -- (11,4) --(11,3);
					\mrezacrtkano{11}{4}
					\draw[fill=gray, color=gray] (9,4) circle(7pt);
					\draw[fill=black] (1,4) circle(5pt);
					\draw[fill=black] (4,1) circle(5pt);
					\draw[fill=black] (4,4) circle(5pt);
					\draw[fill=black] (1,1) circle(5pt);
					\draw[fill=black] (9,1) circle(5pt);
					\draw[fill=gray, color=gray] (11,3) circle(7pt);
					\end{tikzpicture}}
				\\
				& & & & \\
				\adjustbox{valign=c}{
					\begin{tikzpicture}[thick,scale=0.35]
					\foreach \i in {1,...,11}{
						\draw[line width = 1pt] (\i,1) -- (\i,4);
					};
					\draw[line width = 1pt] (11,1) -- (11,4);
					\draw[line width = 1pt] (1,1) -- (11,1);
					\draw[line width = 1pt] (1,4) -- (9,4);
					\draw[line width = 1pt] (10,4) -- (11,4);
					\draw[line width = 3pt] (1,1) -- (10,1) --(10,4) -- (11,4);
					\mrezacrtkano{11}{4}
					\draw[fill=gray, color=gray] (1,1) circle(7pt);
					\draw[fill=black] (1,4) circle(5pt);
					\draw[fill=black] (4,1) circle(5pt);
					\draw[fill=black] (4,4) circle(5pt);
					\draw[fill=black] (9,1) circle(5pt);
					\draw[fill=black] (9,4) circle(5pt);
					\draw[fill=gray, color=gray] (11,4) circle(7pt);
					\end{tikzpicture}}
				& $\to$ &
				\adjustbox{valign=c}{
					\begin{tikzpicture}[thick,scale=0.35]
					\foreach \i in {1,...,9}{
						\draw[line width = 1pt] (\i,1) -- (\i,4);
					};
					\draw[line width = 1pt] (11,1) -- (11,4);
					\draw[line width = 1pt] (1,1) -- (11,1);
					\draw[line width = 1pt] (1,4) -- (11,4);
					\draw[line width = 1pt] (10,4) -- (11,4);
					\draw[line width = 3pt] (4,1) -- (10,1) --(10,3) -- (11,3);
					\mrezacrtkano{11}{4}
					\draw[fill=gray, color=gray] (4,1) circle(7pt);
					\draw[fill=black] (1,4) circle(5pt);
					\draw[fill=black] (1,1) circle(5pt);
					\draw[fill=black] (4,4) circle(5pt);
					\draw[fill=black] (9,1) circle(5pt);
					\draw[fill=black] (9,4) circle(5pt);
					\draw[fill=gray, color=gray] (11,3) circle(7pt);
					\end{tikzpicture}}
				& $+$ &
				\adjustbox{valign=c}{
					\begin{tikzpicture}[thick,scale=0.35]
					\foreach \i in {1,...,11}{
						\draw[line width = 1pt] (\i,1) -- (\i,4);
					};
					\draw[line width = 1pt] (11,1) -- (11,4);
					\draw[line width = 1pt] (1,1) -- (11,1);
					\draw[line width = 1pt] (1,4) -- (9,4);
					\draw[line width = 1pt] (10,4) -- (11,4);
					\draw[line width = 3pt] (4,4) -- (10,4) --(10,3) -- (11,3);
					\mrezacrtkano{11}{4}
					\draw[fill=gray, color=gray] (4,4) circle(7pt);
					\draw[fill=black] (1,1) circle(5pt);
					\draw[fill=black] (4,1) circle(5pt);
					\draw[fill=black] (1,4) circle(5pt);
					\draw[fill=black] (9,1) circle(5pt);
					\draw[fill=black] (9,4) circle(5pt);
					\draw[fill=gray, color=gray] (11,3) circle(7pt);
					\end{tikzpicture}}
				\\
				& & & & \\
				& & \begin{tikzpicture}[thick,scale=0.35]
				\foreach \i in {1,...,11}{
					\draw[line width = 1pt] (\i,1) -- (\i,4);
				};
				\draw[line width = 1pt] (11,1) -- (11,4);
				\draw[line width = 1pt] (1,1) -- (11,1);
				\draw[line width = 1pt] (1,4) -- (10,4);
				\draw[line width = 1pt] (10,4) -- (11,4);
				\draw[line width = 3pt] (1,4) -- (1,3) --(11,3);
				\mrezacrtkano{11}{4}
				\draw[fill=gray, color=gray] (1,4) circle(7pt);
				\draw[fill=black] (1,1) circle(5pt);
				\draw[fill=black] (4,1) circle(5pt);
				\draw[fill=black] (4,4) circle(5pt);
				\draw[fill=black] (9,1) circle(5pt);
				\draw[fill=black] (9,4) circle(5pt);
				\draw[fill=gray, color=gray] (11,3) circle(7pt);
				\end{tikzpicture}
				& & \\
				& & & & \\
				& & \begin{tikzpicture}[thick,scale=0.35]
				\foreach \i in {1,...,11}{
					\draw[line width = 1pt] (\i,1) -- (\i,4);
				};
				\draw[line width = 1pt] (11,1) -- (11,4);
				\draw[line width = 1pt] (1,1) -- (11,1);
				\draw[line width = 1pt] (1,4) -- (11,4);
				\draw[line width = 1pt] (1,3) -- (11,3);
				\draw[line width = 3pt] (1,1) -- (1,2) -- (11,2) -- (11,3);
				\mrezacrtkano{11}{4}
				\draw[fill=gray, color=gray] (1,1) circle(7pt);
				\draw[fill=black] (1,4) circle(5pt);
				\draw[fill=black] (4,1) circle(5pt);
				\draw[fill=black] (4,4) circle(5pt);
				\draw[fill=black] (9,1) circle(5pt);
				\draw[fill=black] (9,4) circle(5pt);
				\draw[fill=gray, color=gray] (11,3) circle(7pt);
				\end{tikzpicture}
				& & \\
			\end{tabular}
			\caption{Strong edge geodetic set in $P_{3^2+2} \cp P_4$.}
			\label{figurepropositionlowerboundPnP4case1example1}
		\end{figure}
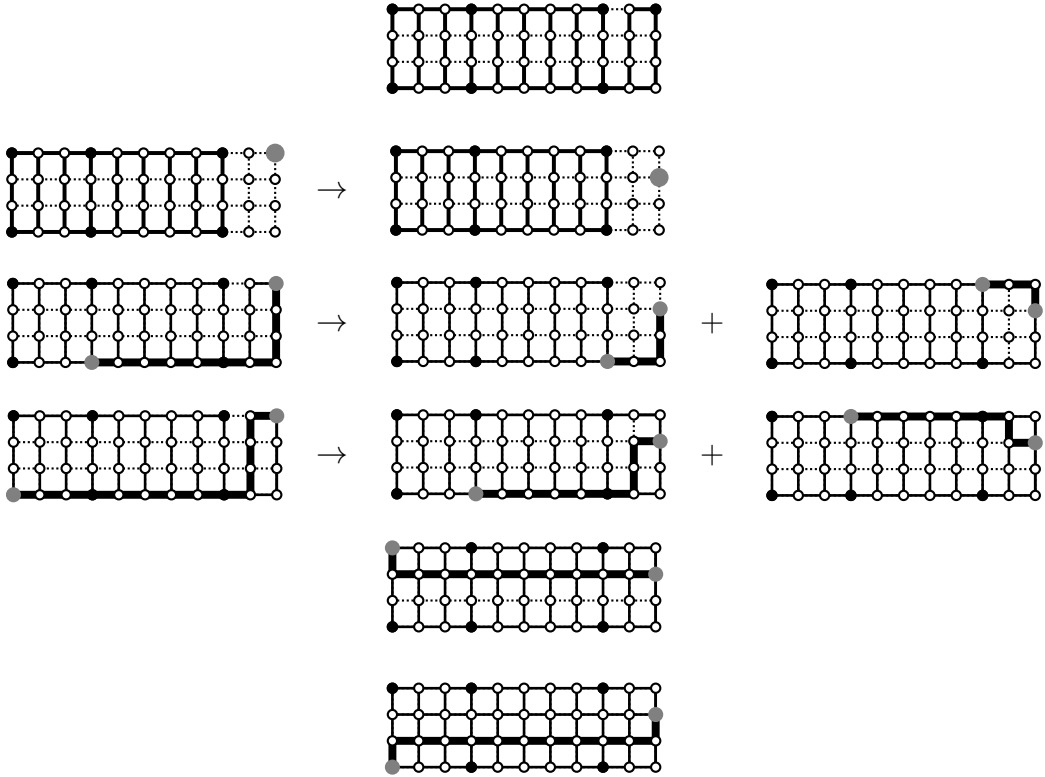
		In this way we have adjusted Algorithm~\ref{algorithm1verticaledgesinPnPm} such that it covers all vertical edges in $P_n \cp P_4$, while it uses neither the $a_1$,$c$-geodesics nor the $b_1$,$c$-geodesics. This means that we can add the $a_1$,$c$-geodesic that covers all the horizontal edges in the second row and the $b_1$,$c$-geodesic that covers all the horizontal edges in the third row. Some edges from the first row are already covered with the $a_{h+1}$,$c$-geodesic, but the rest of them can be covered with the unique $a_1$,$a_k$-geodesic. By symmetry, some of the horizontal edges from the fourth row are covered with the $b_{h+1}$,$c$-geodesic, and the other ones are covered by the $b_1$,$b_k$-geodesic.
	\end{case}
	
	\begin{case} $n = k^2 + h$, $k+1 \leq h \leq 2k-1$ (see Fig.~\ref{figurepropositionlowerboundPnP4case2example1}). \\
		In this case, because $h \leq 2k-1$, Algorithm~\ref{algorithm1verticaledgesinPnPm} never uses a $a_{k+1}$,$b_k$-geodesic. It also never uses the unique $a_{k+1}$,$b_{k+1}$-geodesic. If we add this shortest path, we can replace the existing $b_{h-k}$,$a_{k+1}$-geodesic (this shortest path covers the vertical edges in the $n$-th column) with the one that covers the vertical edges in the $(k(k+1)+1)$-th column (in Algorithm~\ref{algorithm1verticaledgesinPnPm} covered by the $a_{k+1}$,$b_1$-geodesic). Observe that this step does nothing if $h=1$. This way we can replace the existing $a_{k+1}$,$b_1$-geodesic with the one that covers all the horizontal edges in the third row.
		
		Also, if we add the $a_{k+1}$,$b_k$-geodesic that covers all the vertical edges in the $(k^2+1)$-th column, we can replace the existing $a_1$,$b_{k+1}$-geodesic with the one that covers all the horizontal edges in the second row.
		
		\begin{figure}[ht!]
			\centering
			\begin{tikzpicture}[thick,scale=0.35]
			\foreach \i in {1,...,14}{
				\draw[line width = 1pt] (\i,1) -- (\i,4);
			};
			\draw[line width = 1pt] (1,1) -- (12,1);
			\draw[line width = 1pt] (13,1) -- (14,1);
			\draw[line width = 1pt] (1,4) -- (14,4);
			\mrezacrtkano{14}{4}
			\draw[fill=black] (1,1) circle(5pt);
			\draw[fill=black] (1,4) circle(5pt);
			\draw[fill=black] (4,1) circle(5pt);
			\draw[fill=black] (4,4) circle(5pt);
			\draw[fill=black] (9,1) circle(5pt);
			\draw[fill=black] (9,4) circle(5pt);
			\draw[fill=black] (14,1) circle(5pt);
			\draw[fill=black] (14,4) circle(5pt);
			\end{tikzpicture}
			\\
			\vspace{0.5cm}
			\begin{tikzpicture}[thick,scale=0.35]
			\foreach \i in {1,...,14}{
				\draw[line width = 1pt] (\i,1) -- (\i,4);
			};
			\draw[line width = 1pt] (1,1) -- (12,1);
			\draw[line width = 1pt] (13,1) -- (14,1);
			\draw[line width = 1pt] (1,4) -- (14,4);
			\draw[line width = 3pt] (14,1) -- (14,4);
			\mrezacrtkano{14}{4}
			\draw[fill=black] (1,1) circle(5pt);
			\draw[fill=black] (1,4) circle(5pt);
			\draw[fill=black] (4,1) circle(5pt);
			\draw[fill=black] (4,4) circle(5pt);
			\draw[fill=black] (9,1) circle(5pt);
			\draw[fill=black] (9,4) circle(5pt);
			\draw[fill=gray, color=gray] (14,1) circle(7pt);
			\draw[fill=gray, color=gray] (14,4) circle(7pt);
			\end{tikzpicture}
			\\
			\vspace{0.5cm}
			\begin{tabular}{c c c}
				\adjustbox{valign=c}{
					\begin{tikzpicture}[thick,scale=0.35]
					\foreach \i in {1,...,14}{
						\draw[line width = 1pt] (\i,1) -- (\i,4);
					};
					\draw[line width = 1pt] (1,1) -- (12,1);
					\draw[line width = 1pt] (13,1) -- (14,1);
					\draw[line width = 1pt] (1,4) -- (14,4);
					\draw[line width = 3pt] (4,4) -- (14,4) -- (14,1);
					\mrezacrtkano{14}{4}
					\draw[fill=black] (1,1) circle(5pt);
					\draw[fill=black] (1,4) circle(5pt);
					\draw[fill=black] (4,1) circle(5pt);
					\draw[fill=black] (9,4) circle(5pt);
					\draw[fill=black] (9,1) circle(5pt);
					\draw[fill=gray, color=gray] (4,4) circle(7pt);
					\draw[fill=gray, color=gray] (14,1) circle(7pt);
					\draw[fill=black] (14,4) circle(5pt);
					\end{tikzpicture}}
				& $\to$ &
				\adjustbox{valign=c}{
					\begin{tikzpicture}[thick,scale=0.35]
					\foreach \i in {1,...,14}{
						\draw[line width = 1pt] (\i,1) -- (\i,4);
					};
					\draw[line width = 1pt] (1,1) -- (12,1);
					\draw[line width = 1pt] (13,1) -- (14,1);
					\draw[line width = 1pt] (1,4) -- (14,4);
					\draw[line width = 3pt] (4,4) -- (13,4) -- (13,1) -- (14,1);
					\mrezacrtkano{14}{4}
					\draw[fill=black] (1,1) circle(5pt);
					\draw[fill=black] (1,4) circle(5pt);
					\draw[fill=black] (4,1) circle(5pt);
					\draw[fill=black] (9,4) circle(5pt);
					\draw[fill=black] (9,1) circle(5pt);
					\draw[fill=gray, color=gray] (4,4) circle(7pt);
					\draw[fill=gray, color=gray] (14,1) circle(7pt);
					\draw[fill=black] (14,4) circle(5pt);
					\end{tikzpicture}}
				\\
			\end{tabular}
			\\
			\vspace{0.5cm}
			\begin{tabular}{c c c}
				\adjustbox{valign=c}{
					\begin{tikzpicture}[thick,scale=0.35]
					\foreach \i in {1,...,14}{
						\draw[line width = 1pt] (\i,1) -- (\i,4);
					};
					\draw[line width = 1pt] (1,1) -- (12,1);
					\draw[line width = 1pt] (13,1) -- (14,1);
					\draw[line width = 1pt] (1,4) -- (14,4);
					\draw[line width = 3pt] (1,4) -- (13,4) -- (13,1) -- (14,1);
					\mrezacrtkano{14}{4}
					\draw[fill=black] (1,1) circle(5pt);
					\draw[fill=black] (1,4) circle(5pt);
					\draw[fill=black] (4,1) circle(5pt);
					\draw[fill=black] (9,4) circle(5pt);
					\draw[fill=black] (9,1) circle(5pt);
					\draw[fill=gray, color=gray] (1,4) circle(7pt);
					\draw[fill=gray, color=gray] (14,1) circle(7pt);
					\draw[fill=black] (14,4) circle(5pt);
					\end{tikzpicture}}
				& $\to$ &
				\adjustbox{valign=c}{
					\begin{tikzpicture}[thick,scale=0.35]
					\foreach \i in {1,...,14}{
						\draw[line width = 1pt] (\i,1) -- (\i,4);
					};
					\draw[line width = 1pt] (1,1) -- (12,1);
					\draw[line width = 1pt] (13,1) -- (14,1);
					\draw[line width = 1pt] (1,4) -- (14,4);
					\draw[line width = 3pt] (1,4) -- (1,3) -- (14,3) -- (14,1);
					\mrezacrtkano{14}{4}
					\draw[fill=black] (1,1) circle(5pt);
					\draw[fill=black] (1,4) circle(5pt);
					\draw[fill=black] (4,1) circle(5pt);
					\draw[fill=black] (9,4) circle(5pt);
					\draw[fill=black] (9,1) circle(5pt);
					\draw[fill=gray, color=gray] (1,4) circle(7pt);
					\draw[fill=gray, color=gray] (14,1) circle(7pt);
					\draw[fill=black] (14,4) circle(5pt);
					\end{tikzpicture}}
				\\
			\end{tabular}
			\\
			\vspace{0.5cm}
			\begin{tikzpicture}[thick,scale=0.35]
			\foreach \i in {1,...,14}{
				\draw[line width = 1pt] (\i,1) -- (\i,4);
			};
			\draw[line width = 1pt] (1,1) -- (12,1);
			\draw[line width = 1pt] (13,1) -- (14,1);
			\draw[line width = 1pt] (1,3) -- (14,3);
			\draw[line width = 1pt] (1,4) -- (14,4);
			\draw[line width = 3pt] (9,4) -- (10,4) -- (10,1) -- (14,1);
			\mrezacrtkano{14}{4}
			\draw[fill=black] (1,1) circle(5pt);
			\draw[fill=black] (1,4) circle(5pt);
			\draw[fill=black] (4,1) circle(5pt);
			\draw[fill=black] (4,4) circle(5pt);
			\draw[fill=black] (9,1) circle(5pt);
			\draw[fill=black] (14,4) circle(5pt);
			\draw[fill=gray, color=gray] (14,1) circle(7pt);
			\draw[fill=gray, color=gray] (9,4) circle(7pt);
			\end{tikzpicture}
			\\
			\vspace{0.5cm}
			\begin{tabular}{c c c}
				\adjustbox{valign=c}{
					\begin{tikzpicture}[thick,scale=0.35]
					\foreach \i in {1,...,14}{
						\draw[line width = 1pt] (\i,1) -- (\i,4);
					};
					\draw[line width = 1pt] (1,1) -- (14,1);
					\draw[line width = 1pt] (1,3) -- (14,3);
					\draw[line width = 1pt] (1,4) -- (14,4);
					\draw[line width = 3pt] (1,1) -- (10,1) -- (10,4) -- (14,4);
					\mrezacrtkano{14}{4}
					\draw[fill=gray, color=gray] (1,1) circle(7pt);
					\draw[fill=black] (1,4) circle(5pt);
					\draw[fill=black] (4,1) circle(5pt);
					\draw[fill=black] (4,4) circle(5pt);
					\draw[fill=black] (9,1) circle(5pt);
					\draw[fill=black] (9,4) circle(5pt);
					\draw[fill=gray, color=gray] (14,4) circle(7pt);
					\draw[fill=black] (14,1) circle(5pt);
					\end{tikzpicture}}
				& $\to$ &
				\adjustbox{valign=c}{
					\begin{tikzpicture}[thick,scale=0.35]
					\foreach \i in {1,...,14}{
						\draw[line width = 1pt] (\i,1) -- (\i,4);
					};
					\draw[line width = 1pt] (1,1) -- (14,1);
					\draw[line width = 1pt] (1,3) -- (14,3);
					\draw[line width = 1pt] (1,4) -- (14,4);
					\draw[line width = 3pt] (1,1) -- (1,2) -- (14,2) -- (14,4);
					\mrezacrtkano{14}{4}
					\draw[fill=gray, color=gray] (1,1) circle(7pt);
					\draw[fill=black] (1,4) circle(5pt);
					\draw[fill=black] (4,1) circle(5pt);
					\draw[fill=black] (4,4) circle(5pt);
					\draw[fill=black] (9,1) circle(5pt);
					\draw[fill=black] (9,4) circle(5pt);
					\draw[fill=gray, color=gray] (14,4) circle(7pt);
					\draw[fill=black] (14,1) circle(5pt);
					\end{tikzpicture}}
			\end{tabular}
			\caption{Strong edge geodetic set in $P_{3^2+2 \cdot 3 - 1} \cp P_4$.}
			\label{figurepropositionlowerboundPnP4case2example1}
		\end{figure}
		In the same way as in $P_n \cp P_2$ we now cover the horizontal edges in the first row (using the $a_1$,$a_{k+1}$-geodesic) and in the fourth row (using the $b_1$,$b_{k+1}$-geodesic) of $P_n \cp P_4$.
	\end{case}
	Since in all three cases we found a strong edge geodetic set of size $\sge(P_n \cp P_2)$, we can conclude that $\sge(P_n \cp P_4) \leq \sge(P_n \cp P_2)$ for all $n \in \N$ when $n \not = k^2$, $n \not= k^2-1$, and $n \not= k^2+k$ for some $k \in \N$.
	\end{proof}

	We will now show that there is no strong edge geodetic set of size $\left\lceil 2 \sqrt{n} \right\rceil$ for $P_n \cp P_4$ when $n = k^2 -1$, $k^2$, or $k^2+k$ for some $k \in \N$.
		
	\begin{proposition}
		\label{propositionsgePnP4specialexamples}
		If $n = k^2$, $n = k^2 -1$, or $n = k^2 + k$ for some $k \geq 2$, $k \in \N$, then
		$$\sge(P_n \cp P_4) > \sge(P_n \cp P_2).$$
	\end{proposition}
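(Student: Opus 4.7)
The plan is to adapt the counting argument of Proposition~\ref{propositionsgePnP3specialexamples} to four rows. I introduce
\[
f_s(a,b,c,d) = ab + 2ac + 3ad + bc + 2bd + cd,
\]
which upper bounds the number of vertical edges (counted with multiplicity) that can be covered by one shortest path per pair among $s$ vertices of $P_n \cp P_4$ distributed as $a$ in row $1$, $b$ in row $2$, $c$ in row $3$, $d$ in row $4$; the coefficient of each product is the vertical distance between the corresponding rows. An elementary optimization over $a+b+c+d=s$ shows that the maximum of $f_s$ is $3s^2/4$, attained uniquely at $(a,b,c,d)=(s/2,0,0,s/2)$; any integer deviation strictly decreases $f_s$, and the distributions close to the maximum can be enumerated directly.

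For each of the three values of $n$, assume for contradiction that a strong edge geodetic set $S$ of size $s = \left\lceil 2\sqrt{n}~\right\rceil = \sge(P_n \cp P_2)$ exists, and split on the row distribution $(a,b,c,d)$ of $S$. Any distribution with $f_s(a,b,c,d)<3n$ is immediately impossible, since $3n$ is the total number of vertical edges. What remains is a short list of \emph{tight} distributions in each case; for those I force at least $f_s-3n+1$ redundancies (vertical edges covered by two or more of the selected shortest paths), dropping the distinct-edge count strictly below $3n$ and yielding the contradiction.

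The key forced-redundancy argument, when $b=c=0$, proceeds as follows. The row-$2$ horizontal edge $(1,2)(2,2)$ and the row-$3$ horizontal edge $(1,3)(2,3)$ must be covered, but since no $S$-vertex lies in rows $2$ or $3$, monotonicity of shortest paths forces any path covering either one to have an endpoint at $(1,1)$ or $(1,4)$. A brief case analysis then shows that in every subcase either two distinct shortest paths both traverse $(1,1)(1,2)$, or two distinct shortest paths both traverse $(1,3)(1,4)$, or the unique $(1,1),(1,4)$-geodesic shares one of those two edges with a path forced by $(1,2)(2,2)$ or $(1,3)(2,3)$. The same reasoning applied at column $n$ produces a second forced redundancy. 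When $b=1$ or $c=1$, a refined version rules out the remaining tight distributions: a single interior vertex can only help at one column and in one of the two inner rows.

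Combining: for $n=k^2$ the only tight distribution is $(k,0,0,k)$ with $f_s=3n$, so even a single forced redundancy contradicts a covering. For $n=k^2-1$ with $s=2k$ the target $3n = 3k^2-3$ admits several tight distributions, but the column-$1$ and column-$n$ redundancies suffice to overcome the slack in each of them. For $n=k^2+k$ with $s=2k+1$ the tight distributions are $(k,0,0,k+1)$, $(k+1,0,0,k)$, $(k,1,0,k)$ and $(k,0,1,k)$, each ruled out by the appropriate variant of the argument. The principal difficulty is the last step: compared with the $P_n\cp P_3$ case, shortest paths from row $1$ to row $4$ enjoy more freedom (three level crossings instead of two), so the verification that two paths must share a specific vertical edge requires a careful enumeration of monotone path shapes and of which row-$2$ and row-$3$ horizontal edges each such path can simultaneously cover.
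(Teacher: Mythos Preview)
Your approach is essentially the paper's: the same counting function $f_s(a,b,c,d)=ab+2ac+3ad+bc+2bd+cd$, the same ``maximum of $f_s$ minus forced redundancies at the end columns $<3n$'' strategy. The difference is organizational. The paper classifies the first and last column into nine types $A,\dots,I$ according to which of the four column vertices lie in $S$, computes an explicit redundancy value $r(T)$ for each type (in particular $r(F)=2$ when $(1,1),(1,4)\in S$ and $r(H)=3$ when only one corner is in $S$), and pairs this with four refined optimization bounds for $f_s$ under the constraints $b+c\ge 1,2,3$. This avoids having to list tight distributions.

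There is a genuine gap in your outline for $n=k^2-1$. With $s=2k$ the distribution $(k,0,0,k)$ gives $f_s=3k^2=3n+3$, so you need at least four redundancies, not two. Your ``brief case analysis'' for $b=c=0$ only establishes one redundant vertical edge per end column; the paper gets two per end column by observing that, in type $F$, the unique $(1,1),(1,4)$-geodesic already uses all three column-$1$ edges, while the two \emph{distinct} paths forced by $(1,2)(2,2)$ and by $(1,3)(2,3)$ each contribute at least one more column-$1$ edge, hence $r(F)\ge 2$; the type-$H$ analysis gives $r(H)\ge 3$. You need this stronger count. Moreover, for $n=k^2-1$ the list of distributions with $f_s\ge 3n$ is longer than you indicate (for instance $(k-1,1,0,k)$ and $(k,0,1,k-1)$ with $f_s=3n+2$, $(k-1,1,1,k-1)$ and $(k-1,2,0,k-1)$, and several others), and several of these require three redundancies. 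The paper sidesteps this enumeration by using the constrained maxima $f_s\le\frac{1}{12}(9s^2-8)$ when $b+c\ge 1$ and $f_s\le\frac14(3s^2-8)$ when $b+c\ge 2$, combined with the appropriate $r(T)$ values. Your sketch would need either these refined $f_s$ bounds or a complete enumeration together with a sharpened redundancy argument.
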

	
	\begin{proof}
	Let us define the function
	$$f_s (a,b,c,d) = a b + b c +c d + 2ac + 2bd + 3ad$$
	which gives an upper bound on how many different vertical edges can be covered with shortest paths between $s$ vertices, where $a$ vertices are in the first row, $b$ vertices in the second row, $c$ vertices in the third row, and $d$ vertices in the fourth row of $P_n \cp P_4$. The maximum values of function $f_s$ under some bounds are again computed by computer and are gathered in Table~\ref{tablemaxfsPnP4}.
	
\begin{table}[ht!]
	\centering
	\begin{tabular}{c c | c c}
		conditions & & & upper bound \\
		\hline
		& & & \\
		$a,b,c,d \geq 0$ & & & $f_s(a,b,c,d) \leq {1 \over 4} (3s^2)$ \\
		& & & \\
		$a,c,d \geq 0$, & $b \geq 1$ & & $f_s(a,b,c,d) \leq {1 \over 12} (9s^2-8)$ \\
		& & & \\
		$a,b,c,d \geq 0$, & $b+c \geq 2$ & & $f_s(a,b,c,d) \leq {1 \over 4} (3s^2-8)$ \\
		& & & \\
		$a,b,c,d \geq 0$, & $b+c \geq 3$ & & $f_s(a,b,c,d) \leq {1 \over 4} (3s^2-18)$ \\
	\end{tabular}
	\caption{Maximal values of function $f_s(a,b,c,d)$.}
	\label{tablemaxfsPnP4}
\end{table}
	
	Every strong edge geodetic set $S$ has to include at least one vertex from the first column, otherwise it would not be possible to cover the edge $(1,1)(1,2)$. Similarly, every strong edge geodetic set includes at least one vertex from the last column. Depending on a strong edge geodetic set $S$ in $P_n \cp P_4$ we will define a type of first or last column of vertices. All the types for a strong edge geodetic set with at least one vertex in the first and at least one vertex in the last column are gathered in Table~\ref{tableoftypes}. For example, if $(1,1), (1,2) \in S$ and $(1,3),(1,4) \not\in S$, we say that the first column of vertices in $P_n \cp P_4$ is of type $D$. Symmetrically, if $(1,1), (1,2) \not\in S$ and $(1,3),(1,4) \in S$, we also say that the first column of vertices in $P_n \cp P_4$ is of type $D$. We symmetrically define the type of the last column.
	
	\begin{table}[ht!]
		\centering
		\begin{tabular}{ c | c c | c }
			Type $T$ & type in first column & & $r(T)$ \\
			\hline
			& & & \\
			$A$
			&
			\adjustbox{valign=c}{
				\begin{tikzpicture}[thick,scale=0.45]
				\mreza{10}{4}
				\draw[fill=black] (1,1) circle(5pt);
				\draw[fill=black] (1,2) circle(5pt);
				\draw[fill=black] (1,3) circle(5pt);
				\draw[fill=black] (1,4) circle(5pt);
				\end{tikzpicture}}
			& &
			$7$ \\
			& & &\\
			$B$
			&
			\adjustbox{valign=c}{
				\begin{tikzpicture}[thick,scale=0.45]
				\mreza{10}{4}
				\draw[fill=black] (1,1) circle(5pt);
				\draw[fill=black] (1,2) circle(5pt);
				\draw[fill=black] (1,3) circle(5pt);
				\end{tikzpicture}}
			&
			\adjustbox{valign=c}{
				\begin{tikzpicture}[thick,scale=0.45]
				\mreza{10}{4}
				\draw[fill=black] (1,2) circle(5pt);
				\draw[fill=black] (1,3) circle(5pt);
				\draw[fill=black] (1,4) circle(5pt);
				\end{tikzpicture}}
			&
			$2$ \\
			& & & \\
			$C$
			&
			\adjustbox{valign=c}{
				\begin{tikzpicture}[thick,scale=0.45]
				\mreza{10}{4}
				\draw[fill=black] (1,1) circle(5pt);
				\draw[fill=black] (1,2) circle(5pt);
				\draw[fill=black] (1,4) circle(5pt);
				\end{tikzpicture}}
			&
			\adjustbox{valign=c}{
				\begin{tikzpicture}[thick,scale=0.45]
				\mreza{10}{4}
				\draw[fill=black] (1,1) circle(5pt);
				\draw[fill=black] (1,3) circle(5pt);
				\draw[fill=black] (1,4) circle(5pt);
				\end{tikzpicture}}
			&
			$4$ \\
			& & & \\
			$D$
			&
			\adjustbox{valign=c}{
				\begin{tikzpicture}[thick,scale=0.45]
				\mreza{10}{4}
				\draw[fill=black] (1,1) circle(5pt);
				\draw[fill=black] (1,2) circle(5pt);
				\end{tikzpicture}}
			&
			\adjustbox{valign=c}{
				\begin{tikzpicture}[thick,scale=0.45]
				\mreza{10}{4}
				\draw[fill=black] (1,3) circle(5pt);
				\draw[fill=black] (1,4) circle(5pt);
				\end{tikzpicture}}
			&
			$1$ \\
			& & & \\
			$E$
			&
			\adjustbox{valign=c}{
				\begin{tikzpicture}[thick,scale=0.45]
				\mreza{10}{4}
				\draw[fill=black] (1,1) circle(5pt);
				\draw[fill=black] (1,3) circle(5pt);
				\end{tikzpicture}}
			&
			\adjustbox{valign=c}{
				\begin{tikzpicture}[thick,scale=0.45]
				\mreza{10}{4}
				\draw[fill=black] (1,2) circle(5pt);
				\draw[fill=black] (1,4) circle(5pt);
				\end{tikzpicture}}
			&
			$1$ \\
			& & & \\
			$F$
			&
			\adjustbox{valign=c}{
				\begin{tikzpicture}[thick,scale=0.45]
				\mreza{10}{4}
				\draw[fill=black] (1,1) circle(5pt);
				\draw[fill=black] (1,4) circle(5pt);
				\end{tikzpicture}}
			& &
			$2$ \\
			& & & \\
			$G$
			&
			\adjustbox{valign=c}{
				\begin{tikzpicture}[thick,scale=0.45]
				\mreza{10}{4}
				\draw[fill=black] (1,2) circle(5pt);
				\draw[fill=black] (1,3) circle(5pt);
				\end{tikzpicture}}
			& &
			$0$
			\\
			& & & \\
			$H$
			&
			\adjustbox{valign=c}{
				\begin{tikzpicture}[thick,scale=0.45]
				\mreza{10}{4}
				\draw[fill=black] (1,1) circle(5pt);
				\end{tikzpicture}}
			&
			\adjustbox{valign=c}{
				\begin{tikzpicture}[thick,scale=0.45]
				\mreza{10}{4}
				\draw[fill=black] (1,4) circle(5pt);
				\end{tikzpicture}}
			&
			$3$ \\
			& & & \\
			$I$
			&
			\adjustbox{valign=c}{
				\begin{tikzpicture}[thick,scale=0.45]
				\mreza{10}{4}
				\draw[fill=black] (1,2) circle(5pt);
				\end{tikzpicture}}
			&
			\adjustbox{valign=c}{
				\begin{tikzpicture}[thick,scale=0.45]
				\mreza{10}{4}
				\draw[fill=black] (1,3) circle(5pt);
				\end{tikzpicture}}
			&
			$1$ \\
			& & & \\
		\end{tabular}
		\caption{Different types of a strong edge geodetic set in the first or the last column.}
		\label{tableoftypes}
	\end{table}
	
	For a shortest path $P$ in $P_n \cp P_4$, let $E_i(P)$ denote the set of edges in $i$-th column, that is,
	$$E_i(P) =  E(P) \cap \{(i,1)(i,2), (i,2)(i,3), (i,3)(i,4)\}.$$
	For a strong edge geodetic covering $C = \{ P_{x,y}\}$ set
	$$r_C^1 = \sum_{\{x,y\} \in \binom{S}{2}} |E_1(P_{x,y})|-3.$$
	Roughly speaking, $r_C^1$ measures redundancy of the strong edge geodetic covering in the first column of $P_n \cp P_4$. Analogously, the redundancy $r_C^n$ with respect to the last column is introduced.
	
	For each type $T$ of the first column determined by $S$, we can now compute the minimum number of redundant coverings in the first column of edges as
	$r_1(T) = \min_{C}\{r_C^1\}$,
	where $C$ is a strong edge geodetic covering for a strong edge geodetic set $S$, where the first column in $P_n \cp P_4$ is of type $T$. Similarly we can define the minimum number of redundant coverings in the last column of edges, $r_n(T)$. By symmetry, these two numbers are the same, so we can denote $r(T) = r_1(T) = r_n(T)$. For each type $T$, the number $r(T)$ is listed in the last column of Table~\ref{tableoftypes}.
	
	We will show how to compute $r(C)$ for type $C$, for other types it is similar. We can without loss of generality assume $\{(1,1), (1,2), (1,4)\} \subset S$. Between the pairs of these three vertices, there are three unique shortest paths $P_1$, $P_2$, and $P_3$. To cover the horizontal edge $(1,3)(2,3)$ we need another shortest path $P_4$ that has one endvertex in the vertex set $\{(1,1), (1,2), (1,4)\}$. Because $(1,3) \not\in S$, this path includes at least one vertical edge from the first column, which implies
	$$\sum_{\{x,y\} \in \binom{S}{2}} |E_1(P_{x,y})|-3 \geq |E_1(P_1)| + |E_1(P_2)| + |E_1(P_3)| + |E_1(P_4)| - 3 \geq 3.$$
	
	Suppose now that $S$ is a strong edge geodetic set for $P_n \cp P_4$ of cardinality $|S| = \sge(P_n \cp P_2)$. We distinguish four different cases.
	
	\setcaseprenumber{1}
	\begin{case} $S$ does not contain any vertex from the set $\{(1,2),(1,3),(n,2),(n,3)\}$. \\
		In this case the first and the last column are either of type $F$ or type $G$, which implies that $r_1+r_n \geq 4$. Because $f_s(a,b,c,d) \leq {3s^2 \over 4}$ when $a,b,c,d \geq 0$, it holds
		\begin{eqnarray*}
		f_{s}(a,b,c,d)-4 &\leq& {3 \cdot (2k)^2 \over 4}-4 = 3k^2-4 < 3n \text{ for } n=k^2-1; \\
		f_{s}(a,b,c,d)-4 &\leq& {3 \cdot (2k)^2 \over 4}-4 = 3k^2-4 < 3n \text{ for } n=k^2; \\
		f_{s}(a,b,c,d)-4 &\leq& {3 \cdot (2k+1)^2 \over 4}-4 = 3k^2+3k - {13 \over 4} < 3n \text{ for } n=k^2+k, \\
		\end{eqnarray*}
		
		\vspace{-0.4cm}
		\noindent for $s=\sge(P_n \cp P_2)$.
	\end{case}
	
	\begin{case} $S$ contains at least three vertices from the second and the third row. \\
	Because $f_s(a,b,c,d) \leq {1 \over 4} (3s^2-18)$ when $a,b,c,d \geq 0$, $b+c \geq 3$, it holds
		\begin{eqnarray*}
		f_{s}(a,b,c,d) &\leq& {1 \over 4} (3(2k)^2-18) = 3k^2-{18 \over 4} < 3n \text{ for } n=k^2-1; \\
		f_{s}(a,b,c,d) &\leq& {1 \over 4} (3(2k)^2-18) = 3k^2- {18 \over 4} < 3n \text{ for } n=k^2; \\
		f_{s}(a,b,c,d) &\leq& {1 \over 4} (3(2k+1)^2-18) = 3k^2+3k-{15 \over 4} < 3n \text{ for } n=k^2+k, \\
		\end{eqnarray*}
		
		\vspace{-0.4cm}
		\noindent for $s=\sge(P_n \cp P_2)$.
	\end{case}
	
	\begin{case} $S$ contains exactly two vertices from the set $\{(1,2),(1,3),(n,2),(n,3)\}$ and no other vertex from the second and the third row. \\
		If we look at all types of the first and the last column such that the condition holds, we see that $r_1 + r_n$ is always at least $2$ (observe that type $G$ can only be combined with types $F$ and $H$). Because $f_s(a,b,c,d) \leq {1 \over 4} (3s^2-8)$ when $b+c \geq 2$, and $a,b,c,d \geq 0$, it holds
		\begin{eqnarray*}
		f_{s}(a,b,c,d) - 2 &\leq& {1 \over 4} (3(2k)^2-8) - 2 = 3k^2-4 < 3n \text{ for } n=k^2-1; \\
		f_{s}(a,b,c,d) - 2 &\leq& {1 \over 4} (3(2k)^2-8) - 2 = 3k^2-4 < 3n \text{ for } n=k^2; \\
		f_{s}(a,b,c,d) - 2 &\leq& {1 \over 4} (3(2k+1)^2-8) - 2 = 3k^2+3k-{13 \over 4} < 3n \text{ for } n=k^2+k, \\
		\end{eqnarray*}
		
		\vspace{-0.4cm}
		\noindent for $s=\sge(P_n \cp P_2)$.  
	\end{case}
	
	\begin{case} $S$ contains exactly one vertex from the set $\{(1,2),(1,3),(n,2),(n,3)\}$ and no other vertex from the second and the third row. \\
		By symmetry, we can without loss of generality assume that $(1,3),(n,2),(n,3) \not\in S$ and $(1,2) \in S$. This implies that the first column is of type $C$, $D$, $E$, or $H$ and the last column is of type $F$ or $G$. The sum of the number of redundant coverings for the first and the last column is than at least $3$. Because $f_s(a,b,c,d) \leq {1 \over 12} (9s^2-8)$ when $b+c \geq 2$ and $a,b,c,d \geq 0$, it holds
		\begin{eqnarray*}
		f_{s}(a,b,c,d) - 2 &\leq& {1 \over 12} (9(2k)^2-8) - 3 = 3k^2-{11 \over 3} < 3n \text{ for } n=k^2-1; \\
		f_{s}(a,b,c,d) - 2 &\leq& {1 \over 12} (9(2k)^2-8) - 3 = (3k^2-3)-{2 \over 3} < 3n \text{ for } n=k^2; \\
		f_{s}(a,b,c,d) - 2 &\leq& {1 \over 12} (9(2k+1)^2-8) - 3 = (3k^2+3k)-{37 \over 12} < 3n \text{ for } n=k^2+k, \\
		\end{eqnarray*}
		
		\vspace{-0.4cm}
		\noindent for $s=\sge(P_n \cp P_2)$.
	\end{case}
	
	Since $3n$ is the number of all vertical edges in $P_n \cp P_4$, it holds $|S| > \sge(P_n \cp P_2)$ in every case above. This is a contradiction with the assumption that $S$ is a strong edge geodetic set of $P_n \cp P_4$ of cardinality $\sge(P_n \cp P_2)$.
	
	\newpage
	
	Because all four cases led us to a contradiction, we can conclude that for $P_n \cp P_4$, where $n=k^2-1, k^2$, or $k^2+k$ for some $k \in \N$, there is no strong edge geodetic set of size $\sge(P_n \cp P_2)$. By Lemma~\ref{lemmaeasyupperboundsgePnP4} this means that $\sge(P_n \cp P_4) = \sge(P_n \cp P_2) + 1$ when $n=k^2-1, k^2$, or $k^2+k$ for some $k \in \N$. In other words, $\sge(P_{k^2-1} \cp P_4) = \sge(P_{k^2} \cp P_4) = 2k+1$ and $\sge(P_{k^2+k} \cp P_4) = 2k+2$. Also, because $k^2-1 = (k-1)^2 + 2(k-1)$, we have $\sge(P_{k^2+2k} \cp P_4) = 2k+3$.
	\end{proof}

\section{Upper bounds }
\label{sec:upperbounds}

In this concluding section we give two upper bounds on $\sge(P_n \cp P_m)$.

\begin{proposition}
\label{propositiongeneralboundsgePnP2+sgePm-2P2}
	If $n \geq 2$ and $m \geq 2$, then
	$$\sge(P_n \cp P_m) \leq \left\lceil 2 \sqrt{n} ~ \right\rceil + \left\lceil 2 \sqrt{m-2} ~ \right\rceil.$$
\end{proposition}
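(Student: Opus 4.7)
The plan is to decompose the edges of $P_n \cp P_m$ into three groups — the vertical edges, the horizontal edges in the two extreme rows, and the horizontal edges in the middle rows — and cover each group with a separate invocation of the building blocks already developed for $P_n \cp P_2$. First I would run Algorithm~\ref{algorithm1verticaledgesinPnPm} on inputs $n, m$ exactly as in the proof of Theorem~\ref{theoremsgePnP2}; this produces $\lceil 2\sqrt{n}~\rceil$ vertices, all in rows $1$ and $m$, together with geodesics covering every vertical edge of $P_n \cp P_m$. I would then cover the horizontal edges in rows $1$ and $m$ exactly as in the $P_n \cp P_2$ proof, using the unique horizontal geodesic between the extreme $a_i$'s in row $1$ and between the extreme $b_i$'s in row $m$, with the standard small adjustment near column $n$ depending on which case of the decomposition $n = k^2 + h$ we are in.

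The remaining task is to cover the horizontal edges in rows $2, \ldots, m-1$. The key observation is that the middle strip $H = \{(i,j) : 1 \leq i \leq n,\ 2 \leq j \leq m-1\}$ induces a convex subgraph of $P_n \cp P_m$ isomorphic to $P_n \cp P_{m-2}$: the grid distance formula ensures that any two vertices with second coordinate in $[2, m-1]$ admit a geodesic confined to those rows, so every shortest path in the subgrid is also a shortest path in the full grid. By commutativity of the Cartesian product I would view this subgrid as $P_{m-2} \cp P_n$ and apply Algorithm~\ref{algorithm1verticaledgesinPnPm} to it, which contributes $\lceil 2\sqrt{m-2}~\rceil$ further vertices — all lying in columns $1$ and $n$ with row index in $\{2, \ldots, m-1\}$ — together with geodesics covering every ``vertical'' edge of the rotated subgrid. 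In the original coordinates these are precisely the horizontal edges of $P_n \cp P_{m-2}$, i.e., the horizontal edges in rows $2, \ldots, m-1$ of $P_n \cp P_m$.

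The two vertex sets lie in disjoint sets of rows, so their union has the claimed cardinality $\lceil 2\sqrt{n}~\rceil + \lceil 2\sqrt{m-2}~\rceil$. The pairs of vertices used for edge covering by the two Algorithm~\ref{algorithm1verticaledgesinPnPm} invocations are also disjoint — one collection sits entirely in rows $1$ and $m$, the other entirely in the middle strip — so the geodesic assignment is consistent, and every remaining unused pair can be assigned an arbitrary shortest path. The only genuinely nontrivial point, which I regard as the main item to verify rather than a real obstacle, is the isometry of the middle strip, which is immediate from the grid distance formula. The boundary cases $m = 2$ (empty middle strip, second term equals $0$) and $m = 3$ (middle strip is a single copy of $P_n$, so Algorithm~\ref{algorithm1verticaledgesinPnPm} with first parameter $1$ degenerates to selecting the two endpoints of $P_n$) fit into the same framework without modification.
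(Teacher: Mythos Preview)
Your proposal is correct and follows essentially the same approach as the paper: apply Algorithm~\ref{algorithm1verticaledgesinPnPm} with parameters $(n,m)$ to cover all vertical edges and the horizontal edges in rows $1$ and $m$, then apply Algorithm~\ref{algorithm1verticaledgesinPnPm} to the rotated middle strip $P_{m-2}\cp P_n$ to cover the remaining horizontal edges. Your added remarks on the convexity of the middle strip, the disjointness of the two vertex sets, and the degenerate cases $m=2,3$ are all correct and in fact make explicit points the paper leaves implicit.
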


\begin{proof}
	First we cover all the vertical edges in $P_n \cp P_m$ by using Algorithm~\ref{algorithm1verticaledgesinPnPm}. This algorithm uses $\left\lceil 2 \sqrt{n} ~ \right\rceil$ vertices from the first and the last row. We also, similarly as in $P_n \cp P_2$, use these vertices to cover the horizontal edges from the first and the last row (Fig.~\ref{figurepropositiongeneralboundsgePnP2+sgePm-2P2}a).
	
	\begin{figure}[ht!]
	\centering
	\begin{tabular}{c c c}
	\begin{tikzpicture}[thick,scale=0.36]
		\foreach \i in {1,...,14}{
			\draw[line width = 1.5pt] (\i,1) -- (\i,8);
		};
		\draw[line width = 1.5pt] (1,1) -- (14,1);
		\draw[line width = 1.5pt] (1,8) -- (14,8);
		\mrezacrtkano{14}{8}
		\draw[fill=black] (1,1) circle(5pt);
		\draw[fill=black] (1,8) circle(5pt);
		\draw[fill=black] (4,1) circle(5pt);
		\draw[fill=black] (4,8) circle(5pt);
		\draw[fill=black] (9,1) circle(5pt);
		\draw[fill=black] (9,8) circle(5pt);
		\draw[fill=black] (14,1) circle(5pt);
		\draw[fill=black] (14,8) circle(5pt);
	\end{tikzpicture}
	& &
	\begin{tikzpicture}[thick,scale=0.36]
		\draw[line width = 1pt, color=gray] (1,0) -- (14,0);
		\draw[line width = 1pt, color=gray] (1,7) -- (14,7);
		\foreach \i in {1,...,14}{
			\draw[line width = 2pt] (\i,1) -- (\i,6);
			\draw[line width = 1pt, color=gray] (\i,0) --(\i,1);
			\draw[line width = 1pt, color=gray] (\i,6) --(\i,7);
			\draw[color=gray, fill=white] (\i,0) circle(5pt);
			\draw[color=gray, fill=white] (\i,7) circle(5pt);
		};
		\mrezacrtkano{14}{6}
		\draw[color=gray, fill=gray] (1,0) circle(5pt);
		\draw[color=gray, fill=gray] (1,7) circle(5pt);
		\draw[color=gray, fill=gray] (4,0) circle(5pt);
		\draw[color=gray, fill=gray] (4,7) circle(5pt);
		\draw[color=gray, fill=gray] (9,0) circle(5pt);
		\draw[color=gray, fill=gray] (9,7) circle(5pt);
		\draw[color=gray, fill=gray] (14,0) circle(5pt);
		\draw[color=gray, fill=gray] (14,7) circle(5pt);
	\end{tikzpicture}
	\\
	a && b \\
	& & \\
	\begin{tikzpicture}[thick,scale=0.36]
		\begin{scope}[rotate=-90]
		\draw[line width = 1pt, color=gray] (1,0) -- (14,0);
		\draw[line width = 1pt, color=gray] (1,7) -- (14,7);
		\foreach \i in {1,...,14}{
			\draw[line width = 2pt] (\i,1) -- (\i,6);
			\draw[line width = 1pt, color=gray] (\i,0) --(\i,1);
			\draw[line width = 1pt, color=gray] (\i,6) --(\i,7);
			\draw[color=gray, fill=white] (\i,0) circle(5pt);
			\draw[color=gray, fill=white] (\i,7) circle(5pt);
		};
		\foreach \i in {1,...,6}{
			\draw[line width = 2pt] (1,\i) -- (14,\i);
		};
		\mreza{14}{6}
		\draw[color=gray, fill=gray] (1,0) circle(5pt);
		\draw[color=gray, fill=gray] (1,7) circle(5pt);
		\draw[color=gray, fill=gray] (4,0) circle(5pt);
		\draw[color=gray, fill=gray] (4,7) circle(5pt);
		\draw[color=gray, fill=gray] (9,0) circle(5pt);
		\draw[color=gray, fill=gray] (9,7) circle(5pt);
		\draw[color=gray, fill=gray] (14,0) circle(5pt);
		\draw[color=gray, fill=gray] (14,7) circle(5pt);
		\draw[fill=black] (1,1) circle(5pt);
		\draw[fill=black] (1,4) circle(5pt);
		\draw[fill=black] (1,6) circle(5pt);
		\draw[fill=black] (14,1) circle(5pt);
		\draw[fill=black] (14,4) circle(5pt);
		\end{scope}
	\end{tikzpicture}
	& &
	\begin{tikzpicture}[thick,scale=0.36]
		\draw[line width = 1.5pt, color=gray] (1,0) -- (14,0);
		\draw[line width = 1.5pt, color=gray] (1,7) -- (14,7);
		\foreach \i in {1,...,14}{
			\draw[line width = 1.5pt] (\i,1) -- (\i,6);
			\draw[line width = 1.5pt, color=gray] (\i,0) --(\i,1);
			\draw[line width = 1.5pt, color=gray] (\i,6) --(\i,7);
			\draw[color=gray, fill=white] (\i,0) circle(5pt);
			\draw[color=gray, fill=white] (\i,7) circle(5pt);
		};
		\foreach \i in {1,...,6}{
			\draw[line width = 1.5pt] (1,\i) -- (14,\i);
		};
		\mreza{14}{6}
		\draw[color=gray, fill=gray] (1,0) circle(5pt);
		\draw[color=gray, fill=gray] (1,7) circle(5pt);
		\draw[color=gray, fill=gray] (4,0) circle(5pt);
		\draw[color=gray, fill=gray] (4,7) circle(5pt);
		\draw[color=gray, fill=gray] (9,0) circle(5pt);
		\draw[color=gray, fill=gray] (9,7) circle(5pt);
		\draw[color=gray, fill=gray] (14,0) circle(5pt);
		\draw[color=gray, fill=gray] (14,7) circle(5pt);
		\draw[fill=black] (1,1) circle(5pt);
		\draw[fill=black] (1,4) circle(5pt);
		\draw[fill=black] (1,6) circle(5pt);
		\draw[fill=black] (14,1) circle(5pt);
		\draw[fill=black] (14,4) circle(5pt);
	\end{tikzpicture}
	\\
	c & & d \\
	\end{tabular}
	\caption{Strong edge geodetic set for $P_n \cp P_m$ with cardinality $\left\lceil 2 \sqrt{n} ~ \right\rceil + \left\lceil 2 \sqrt{m-2} ~ \right\rceil$ for $n=14$ and $m=8$.}
	\label{figurepropositiongeneralboundsgePnP2+sgePm-2P2}
	\end{figure}
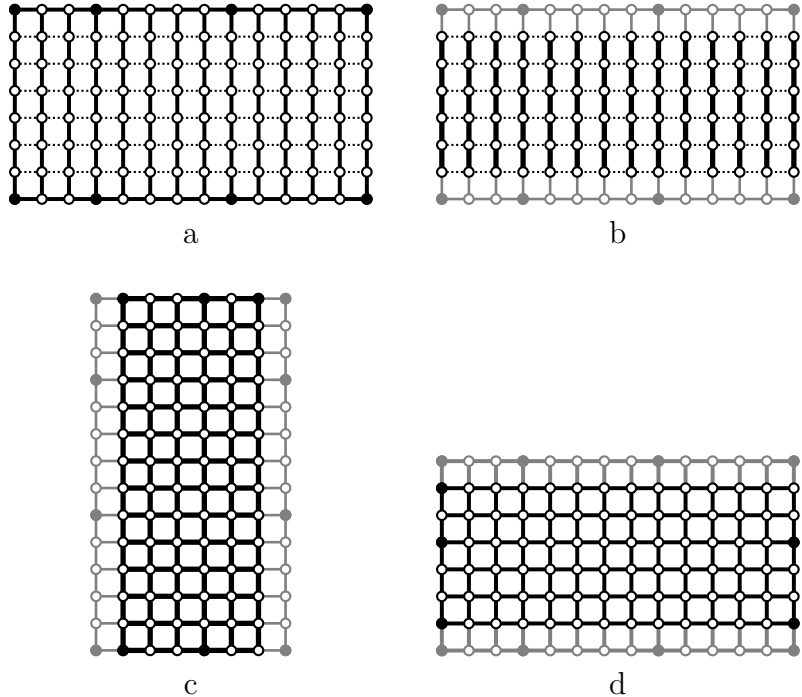
	
	In second step we look at the subgraph $H$ of $P_n \cp P_m$ without the first and the last row (Fig.~\ref{figurepropositiongeneralboundsgePnP2+sgePm-2P2}b). $H$ is isomorphic to $P_n \cp P_{m-2}$. To cover all the horizontal edges from $H$, we use Algorithm~\ref{algorithm1verticaledgesinPnPm} on $H$ rotated by $90$ degrees, which is the same as using Algorithm~\ref{algorithm1verticaledgesinPnPm} on $P_{m-2} \cp P_n$ (Fig.~\ref{figurepropositiongeneralboundsgePnP2+sgePm-2P2}c). This algorithm uses exactly $\left\lceil 2 \sqrt{m-2} ~ \right\rceil$ vertices and cover all the horizontal edges from $H$, which are exactly the edges that have not been covered in the first part of the proof.
\end{proof}

With a similar, but a bit more involved, idea as in the proof of Proposition~\ref{propositiongeneralboundsgePnP2+sgePm-2P2}, we are going to prove the following proposition.

\begin{proposition}
	\label{propositionupperboundPnPm3}
	If $n \geq 3$ and $m \geq 3$, then
	$$\sge(P_n \cp P_m) \leq \left\lceil 2 \sqrt{n+2} \right\rceil + \left\lceil 2 \sqrt{m\phantom{!}} \right\rceil - 4.$$
\end{proposition}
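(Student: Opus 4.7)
The plan is to refine the idea of Proposition~\ref{propositiongeneralboundsgePnP2+sgePm-2P2} by replacing the interior-strip application of Algorithm~\ref{algorithm1verticaledgesinPnPm} with an application to the \emph{full} rotated grid $P_m \cp P_n$, and by arranging the two resulting vertex sets to share the four corners of $P_n \cp P_m$. The overlap at these four corners accounts for the $-4$ appearing in the bound.

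First, I run Algorithm~\ref{algorithm1verticaledgesinPnPm} on $P_m \cp P_n$ to obtain a set $S_2$ of $\left\lceil 2 \sqrt{m} ~ \right\rceil$ vertices lying in columns $1$ and $n$ of $P_n \cp P_m$, together with shortest paths covering every horizontal edge. Inspection of the algorithm shows that $S_2$ always contains the three corners $(1,1)$, $(n,1)$, and $(n,m)$; if $m$ is of the form $l^2 + h'$ with $1 \leq h' \leq l$, I further modify $S_2$ (as in the proof of Proposition~\ref{propositionsgePnP3moreexamples}) to include the fourth corner $(1,m)$ by substituting it for an inner algorithm vertex and redirecting the associated geodesics, without changing the total size $\left\lceil 2 \sqrt{m} ~ \right\rceil$.

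Second, I construct a set $S_1$ in rows $1$ and $m$ of size at most $\left\lceil 2 \sqrt{n+2} ~ \right\rceil$ that covers all vertical edges and the horizontal edges of rows $1$ and $m$, and contains all four corners. Starting from Algorithm~\ref{algorithm1verticaledgesinPnPm} on $P_n \cp P_m$ (which always places $(1,1)$, $(1,m)$, and $(n,m)$ in $S_1$) and using a modification analogous to the one in the proof of Theorem~\ref{theoremsgePnP4}, I either already have $(n,1) \in S_1$ or insert it by replacing an inner algorithm vertex and reassigning the affected shortest paths. Taking the bound $\left\lceil 2 \sqrt{n+2} ~ \right\rceil$ rather than $\left\lceil 2 \sqrt{n} ~ \right\rceil$ is what allows occasionally adjoining an extra vertex in rows $1$ or $m$, so that enough pairs of $S_1$--vertices are available to cover every column of vertical edges after the four corners have been forced into the set.

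Setting $S = S_1 \cup S_2$, the four corners lie in $S_1 \cap S_2$, so
$$|S| \leq |S_1| + |S_2| - 4 \leq \left\lceil 2 \sqrt{n+2} ~ \right\rceil + \left\lceil 2 \sqrt{m} ~ \right\rceil - 4.$$
The union of the shortest paths from the two constructions covers every edge of $P_n \cp P_m$, provided that the two geodesics prescribed at each of the diagonal corner pairs $(1,1)$--$(n,m)$ and $(1,m)$--$(n,1)$ are reconciled by selecting one and verifying that the edges that the discarded candidate would have covered are already covered by some non-corner pair. The main obstacle I foresee is precisely this case analysis on the residue class of $n$ (and of $m$), simultaneously certifying the size bound on $S_1$, the inclusion of all four corners in both $S_1$ and $S_2$, and the feasibility of amalgamating the two path families into a single valid assignment of one geodesic per pair of $S$.
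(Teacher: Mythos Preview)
Your overall architecture matches the paper's proof: build $S_1$ of size $\left\lceil 2\sqrt{n+2}\,\right\rceil$ in rows $1$ and $m$ to cover the vertical edges, build $S_2$ of size $\left\lceil 2\sqrt{m}\,\right\rceil$ in columns $1$ and $n$ to cover the horizontal edges, force all four corners into both sets, and subtract $4$ for the overlap. The paper does exactly this, calling the corner-forcing variant ``Algorithm~\ref{algorithm1verticaledgesinPnPm}*''.

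The gap is in your handling of the diagonal corner pairs. Your proposal to ``select one and verify that the edges that the discarded candidate would have covered are already covered by some non-corner pair'' does not work as stated: in Algorithm~\ref{algorithm1verticaledgesinPnPm}* applied to $P_n\cp P_m$, the $(1,1)$--$(n,m)$ and $(1,m)$--$(n,1)$ geodesics each cover an entire column of vertical edges that no other $S_1$-pair covers, while in the rotated application they each cover an entire row of horizontal edges that no other $S_2$-pair covers. You cannot discard either assignment without losing coverage.

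The paper's resolution---and the real reason for the ``$+2$'' in $\left\lceil 2\sqrt{n+2}\,\right\rceil$---is not to have ``enough pairs after forcing the corners in'' (forcing corners costs nothing; one simply relocates $a_k$), but to construct $S_1$ so that it \emph{never uses} the two diagonal corner geodesics. For $n=k^2+h$ with $1\le h\le k-2$ or $k+1\le h\le 2k-1$ there is already enough slack to reroute those two geodesics onto unused $S_1$-pairs; for the tight residues $n\in\{k^2,\,k^2+k-1,\,k^2+k,\,k^2+2k\}$ one adjoins a single extra vertex $c$ in row $m$ and reroutes the two diagonal geodesics through $c$. In every case this yields $|S_1|\le\left\lceil 2\sqrt{n+2}\,\right\rceil$ with the diagonal pairs left free, so $S_2$ may use them for horizontal coverage without conflict.
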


\begin{proof}
	First we will adjust Algorithm~\ref{algorithm1verticaledgesinPnPm} such that it will use all the corner vertices, that is $(1,1),(1,m),(n,1),(n,m)$. We call the new algorithm Algorithm~\ref{algorithm1verticaledgesinPnPm}*. For $n = k^2+h$, where $h=0$ or $k+1 \leq h \leq 2k$, Algorithm~\ref{algorithm1verticaledgesinPnPm} already uses all the corner vertices. When $n=k^2+h$, $1 \leq h \leq k$, redefine the vertex $a_k$ as $a_k = (n,1)$. The shortest paths between vertices $a_k$ and $b_i$ in Algorithm~\ref{algorithm1verticaledgesinPnPm}* are the ones that cover the same vertical edges as shortest paths between vertices $a_k$ and $b_i$ in Algortihm~\ref{algorithm1verticaledgesinPnPm}. An example output of this algorithm is shown in Fig.~\ref{figureecamplealgorithm*cornervertices}.
	
	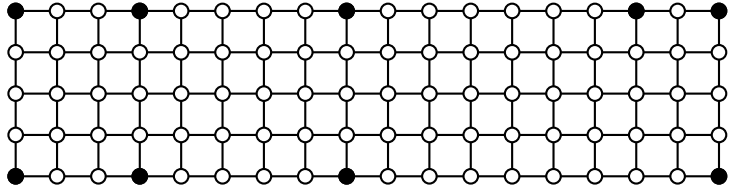
\begin{figure}[ht!]
		\centering
		\begin{tikzpicture}[thick,scale=0.55]
		\mreza{18}{5}
		\draw[fill=black] (1,1) circle(5pt);
		\draw[fill=black] (1,5) circle(5pt);
		\draw[fill=black] (4,1) circle(5pt);
		\draw[fill=black] (4,5) circle(5pt);
		\draw[fill=black] (9,1) circle(5pt);
		\draw[fill=black] (9,5) circle(5pt);
		\draw[fill=black] (16,5) circle(5pt);
		\draw[fill=black] (18,1) circle(5pt);
		\draw[fill=black] (18,5) circle(5pt);
		\end{tikzpicture}
		\caption{Strong edge geodetic set of $P_{4^2+2} \cp P_5$ from Algorithm~\ref{algorithm1verticaledgesinPnPm}*.}
		\label{figureecamplealgorithm*cornervertices}
	\end{figure}

	When $n=k^2$, $n=k^2+k-1$, $n=k^2+k$, or $n=k^2+2k$ for some $k \in \N$, we will cover the vertical edges in $P_n \cp P_m$ in the following way. First, we use Algorithm~\ref{algorithm1verticaledgesinPnPm}* with $\left\lceil  \sqrt{n} ~ \right\rceil$ vertices $V_1$. To $V_1$ we add $c=((k-1)^2+k,m)$. We can then cover the vertical edges covered by the $(1,1),(n,m)$-geodesic and the $(n,1),(1,m)$-geodesic with $(1,1),c$-geodesic and $(n,1),c$-geodesic. In this way, we can remove shortest paths between corner vertices (Fig.~\ref{figurereplacingshortestpathsaddedvertex}).

	\begin{figure}[ht!]
	  	\centering
	  	\begin{tabular}{c c c}
	  		\adjustbox{valign=c}{
	  			\begin{tikzpicture}[thick,scale=0.55]
	  			\draw[line width = 3pt] (1,1) -- (5,1) -- (5,5) -- (10,5);
	  			\mreza{10}{5}
	  			\draw[fill=black] (1,1) circle(5pt);
	  			\draw[fill=black] (1,5) circle(5pt);
	  			\draw[fill=black] (4,1) circle(5pt);
	  			\draw[fill=black] (4,5) circle(5pt);
	  			\draw[fill=black] (10,1) circle(5pt);
	  			\draw[fill=black] (9,5) circle(5pt);
	  			\draw[fill=black] (10,5) circle(5pt);
	  			\draw[color=gray, fill=gray] (7,5) circle(7pt);
	  			\end{tikzpicture}}
	  		&
	  		$\to$
	  		&
	  		\adjustbox{valign=c}{
	  			\begin{tikzpicture}[thick,scale=0.55]
	  			\draw[line width = 3pt] (1,1) -- (5,1) -- (5,5) -- (7,5);
	  			\mreza{10}{5}
	  			\draw[fill=black] (1,1) circle(5pt);
	  			\draw[fill=black] (1,5) circle(5pt);
	  			\draw[fill=black] (4,1) circle(5pt);
	  			\draw[fill=black] (4,5) circle(5pt);
	  			\draw[fill=black] (10,1) circle(5pt);
	  			\draw[fill=black] (9,5) circle(5pt);
	  			\draw[fill=black] (10,5) circle(5pt);
	  			\draw[color=gray, fill=gray] (7,5) circle(7pt);
	  			\end{tikzpicture}}
	  		\\
	  		& & \\
	  		\adjustbox{valign=c}{
	  			\begin{tikzpicture}[thick,scale=0.55]
	  			\draw[line width = 3pt] (1,5) -- (7,5) -- (7,1) -- (10,1);
	  			\mreza{10}{5}
	  			\draw[fill=black] (1,1) circle(5pt);
	  			\draw[fill=black] (1,5) circle(5pt);
	  			\draw[fill=black] (4,1) circle(5pt);
	  			\draw[fill=black] (4,5) circle(5pt);
	  			\draw[fill=black] (10,1) circle(5pt);
	  			\draw[fill=black] (9,5) circle(5pt);
	  			\draw[fill=black] (10,5) circle(5pt);
	  			\draw[color=gray, fill=gray] (7,5) circle(7pt);
	  			\end{tikzpicture}}
	  		&
	  		$\to$
	  		&
	  		\adjustbox{valign=c}{
	  			\begin{tikzpicture}[thick,scale=0.55]
	  			\draw[line width = 3pt] (7,5) --(7,1) -- (10,1);
	  			\mreza{10}{5}
	  			\draw[fill=black] (1,1) circle(5pt);
	  			\draw[fill=black] (1,5) circle(5pt);
	  			\draw[fill=black] (4,1) circle(5pt);
	  			\draw[fill=black] (4,5) circle(5pt);
	  			\draw[fill=black] (10,1) circle(5pt);
	  			\draw[fill=black] (9,5) circle(5pt);
	  			\draw[fill=black] (10,5) circle(5pt);
	  			\draw[color=gray, fill=gray] (7,5) circle(7pt);
	  			\end{tikzpicture}}
	  	\end{tabular}
	  	\caption{Covering vertical edges in $P_n \cp P_m$ without using shortest paths between vertices $(1,1)$ and $(n,m)$ and between vertices $(n,1)$ and $(1,m)$.}
	  	\label{figurereplacingshortestpathsaddedvertex}
	  \end{figure}
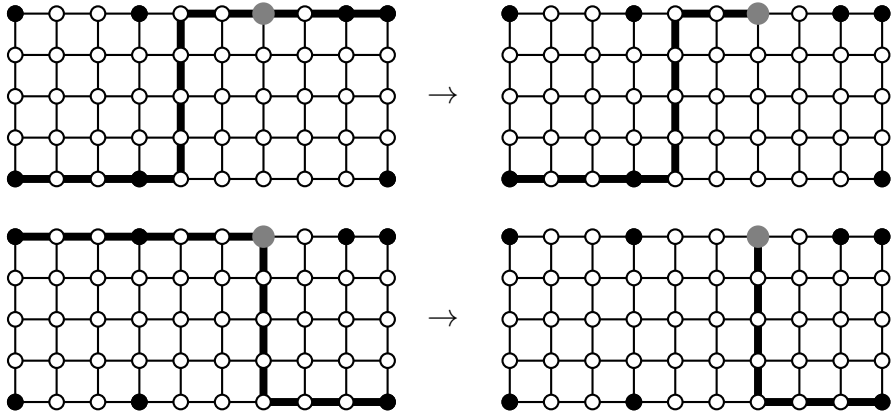

	When $n = k^2 + h$; $k, h \in \N$; $1 \leq h \leq k-2$, we will adjust Algorithm~\ref{algorithm1verticaledgesinPnPm}* to cover all the vertical edges with the same vertex set and without using the shortest paths between vertices $a_1$ and $b_{k+1}$ and between vertices $a_k$ and $b_1$. First we notice that Algorithm~\ref{algorithm1verticaledgesinPnPm}* does not use the unique $a_k$,$b_{k+1}$-geodesic. If we add it, we can replace the $a_{h}$,$b_{k+1}$-geodesic with the one that covers all the vertical edges in the $(k^2+1)$-th column. We can then remove the $a_1$,$b_{k+1}$-geodesic (it also covers the vertical edges in the $(k^2+1)$-th column). Algorithm~\ref{algorithm1verticaledgesinPnPm}* also does not use the $a_{k-1}$,$b_{k+1}$-geodesic. If we add the $a_{k-1}$,$b_{k+1}$-geodesic that covers all the vertical edges in the $((k-1)^2+k)$-th column, we can remove the $a_k$,$b_{k+1}$-geodesic.
	
	When $n = k^2 + h$, $k, h \in \N$, $k+1 \leq h \leq 2k-1$, we can use a similar adjustment as in the proof of Theorem~\ref{theoremsgePnP4} for $n=k^2+h$, $k+1 \leq h \leq 2k-1$, and remove the shortest paths between vertices $a_1$ and $b_{k+1}$ and between vertices $a_{k+1}$ and $b_1$ that cover the horizontal edges in the second and the third row.
	
	In the first part of the proof we defined the algorithm that uses $\left\lceil 2 \sqrt{n+2} ~ \right\rceil$ vertices and covers all the vertical rows in $P_n \cp P_m$, while it uses neither the $(1,1)$,$(n,m)$-geodesic, nor the $(1,m)$,$(n,1)$-geodesic.
	
	To cover the horizontal edges, we use Algorithm~\ref{algorithm1verticaledgesinPnPm}* on rotated $P_n \cp P_m$ by $90$ degrees. This part will use $\left\lceil  \sqrt{m} ~ \right\rceil$ vertices, where four ($(1,1),(1,m),(n,1),(n,m)$) of them are already used in the first part. Because the first part does not use shortest paths between these four vertices, the condition that between any two vertices we use at most one shortest path still holds.
	
	In the first and in the second part we have covered all the edges in $P_n \cp P_m$, using exactly $\left\lceil  \sqrt{n+2} ~ \right\rceil + \left\lceil  \sqrt{m} ~ \right\rceil - 4$ vertices.  
\end{proof}

If we combine all the results from this paper, we can see that the bound from Proposition~\ref{propositiongeneralboundsgePnP2+sgePm-2P2} is sharp when $m=2$ and is not sharp for $m=3$ and $m=4$. The bounds from Proposition~\ref{propositionupperboundPnPm3} are sharp when $m=3$ and $n=k^2$ or $k^2+k$ for some integer $k$, as well as when $m=4$ and $n=k^2$, $k^2+k$ or $k^2+2k$ for some integer $k$.

\medskip

\bibliography{references.bib}
\bibliographystyle{plain}

\end{document}